\DeclareMathAlphabet{\mathbbe}{U}{bbold}{m}{n}
\setlist{}
\newtheorem{thm}{Theorem}[section]
\newtheorem{lem}[thm]{Lemma}
\newtheorem{prop}[thm]{Proposition}
\theoremstyle{definition}
\newtheorem{defn}[thm]{Definition}
\newtheorem{ex}[thm]{Example}
\theoremstyle{remark}
\newtheorem{rmk}[thm]{Remark}
\newtheorem{war}[thm]{Warning}
\let\c@equation\c@thm
\numberwithin{equation}{section}
\newcommand{\id}{\textup{id}}
\newcommand{\cod}{\textup{cod}}
\newcommand{\dom}{\textup{dom}}
\newcommand{\To}{\Rightarrow}
\newcommand{\ev}{\textup{ev}}
\renewcommand{\AA}{\mathbb{A}}
\newcommand{\BB}{\mathbb{B}}
\newcommand{\CC}{\mathbb{C}}
\newcommand{\II}{\mathbb{I}}
\newcommand{\RR}{\mathbb{R}}
\newcommand{\UU}{\mathbb{U}}
\newcommand{\XX}{\mathbb{X}}
\newcommand{\YY}{\mathbb{Y}}
\newcommand{\ZZ}{\mathbb{Z}}
\newcommand{\1}{\mathbbe{1}}
\newcommand{\2}{\mathbbe{2}}  
\newcommand{\3}{\mathbbe{3}}
\newcommand{\4}{\mathbbe{4}}
\newcommand{\cat}[1]{\textup{\textsf{#1}}}
\newcommand{\cA}{\mathcal{A}}
\newcommand{\cB}{\mathcal{B}}
\newcommand{\cC}{\mathcal{C}}
\newcommand{\cD}{\mathcal{D}}
\newcommand{\cE}{\mathcal{E}}
\newcommand{\cF}{\mathcal{F}}
\newcommand{\cJ}{\mathcal{J}}
\newcommand{\Fib}{\cat{Fib}}
\newcommand{\TFib}{\cat{TFib}}
\newcommand{\cart}{{\cat{cart}}}
\newcommand{\Ladj}{\mathbb{L}\cat{adj}}
\newcommand{\Radj}{\mathbb{R}\cat{adj}}
\newcommand{\slice}[1]{{/{#1}}}
\newcommand{\maps}{\colon}
\newcommand{\TypeFont}[1]{\mathsf{#1}}
\def\prd#1{\@ifnextchar\bgroup{\prd@parens{#1}}{%
    \@ifnextchar\sm{\prd@parens{#1}\@eatsm}{%
    \@ifnextchar\prd{\prd@parens{#1}\@eatprd}{%
    \@ifnextchar\;{\prd@parens{#1}\@eatsemicolonspace}{%
    \@ifnextchar\\{\prd@parens{#1}\@eatlinebreak}{%
    \@ifnextchar\narrowbreak{\prd@parens{#1}\@eatnarrowbreak}{%
      \prd@noparens{#1}}}}}}}}
\def\prd@parens#1{\@ifnextchar\bgroup%
  {\mathchoice{\@dprd{#1}}{\@tprd{#1}}{\@tprd{#1}}{\@tprd{#1}}\prd@parens}%
  {\@ifnextchar\sm%
    {\mathchoice{\@dprd{#1}}{\@tprd{#1}}{\@tprd{#1}}{\@tprd{#1}}\@eatsm}%
    {\mathchoice{\@dprd{#1}}{\@tprd{#1}}{\@tprd{#1}}{\@tprd{#1}}}}}
\def\@eatsm\sm{\sm@parens}
\def\prd@noparens#1{\mathchoice{\@dprd@noparens{#1}}{\@tprd{#1}}{\@tprd{#1}}{\@tprd{#1}}}
\def\lprd#1{\@ifnextchar\bgroup{\@lprd{#1}\lprd}{\@@lprd{#1}}}
\def\@lprd#1{\mathchoice{{\textstyle\prod}}{\prod}{\prod}{\prod}({\textstyle #1})\;}
\def\@@lprd#1{\mathchoice{{\textstyle\prod}}{\prod}{\prod}{\prod}({\textstyle #1}),\ }
\def\tprd#1{\@tprd{#1}\@ifnextchar\bgroup{\tprd}{}}
\def\@tprd#1{\mathchoice{{\textstyle\prod_{(#1)}}}{\prod_{(#1)}}{\prod_{(#1)}}{\prod_{(#1)}}}
\def\dprd#1{\@dprd{#1}\@ifnextchar\bgroup{\dprd}{}}
\def\@dprd#1{\prod_{#1}\,}
\def\@dprd@noparens#1{\prod_{#1}\,}
\def\@eatnarrowbreak\narrowbreak{%
  \@ifnextchar\prd{\narrowbreak\@eatprd}{%
    \@ifnextchar\sm{\narrowbreak\@eatsm}{%
      \narrowbreak}}}
\def\@eatlinebreak\\{%
  \@ifnextchar\prd{\\\@eatprd}{%
    \@ifnextchar\sm{\\\@eatsm}{%
      \\}}}
\def\@eatsemicolonspace\;{%
  \@ifnextchar\prd{\;\@eatprd}{%
    \@ifnextchar\sm{\;\@eatsm}{%
      \;}}}
\def\sm#1{\@ifnextchar\bgroup{\sm@parens{#1}}{%
    \@ifnextchar\prd{\sm@parens{#1}\@eatprd}{%
    \@ifnextchar\sm{\sm@parens{#1}\@eatsm}{%
    \@ifnextchar\;{\sm@parens{#1}\@eatsemicolonspace}{%
    \@ifnextchar\\{\sm@parens{#1}\@eatlinebreak}{%
    \@ifnextchar\narrowbreak{\sm@parens{#1}\@eatnarrowbreak}{%
        \sm@noparens{#1}}}}}}}}
\def\sm@parens#1{\@ifnextchar\bgroup%
  {\mathchoice{\@dsm{#1}}{\@tsm{#1}}{\@tsm{#1}}{\@tsm{#1}}\sm@parens}%
  {\@ifnextchar\prd%
    {\mathchoice{\@dsm{#1}}{\@tsm{#1}}{\@tsm{#1}}{\@tsm{#1}}\@eatprd}%
    {\mathchoice{\@dsm{#1}}{\@tsm{#1}}{\@tsm{#1}}{\@tsm{#1}}}}}
\def\@eatprd\prd{\prd@parens}
\def\sm@noparens#1{\mathchoice{\@dsm@noparens{#1}}{\@tsm{#1}}{\@tsm{#1}}{\@tsm{#1}}}
\def\lsm#1{\@ifnextchar\bgroup{\@lsm{#1}\lsm}{\@@lsm{#1}}}
\def\@lsm#1{\mathchoice{{\textstyle\sum}}{\sum}{\sum}{\sum}({\textstyle #1})\;}
\def\@@lsm#1{\mathchoice{{\textstyle\sum}}{\sum}{\sum}{\sum}({\textstyle #1}),\ }
\def\tsm#1{\@tsm{#1}\@ifnextchar\bgroup{\tsm}{}}
\def\@tsm#1{\mathchoice{{\textstyle\sum_{(#1)}}}{\sum_{(#1)}}{\sum_{(#1)}}{\sum_{(#1)}}}
\def\dsm#1{\@dsm{#1}\@ifnextchar\bgroup{\dsm}{}}
\def\@dsm#1{\sum_{#1}\,}
\def\@dsm@noparens#1{\sum_{#1}\,}
\def\lam#1{{\lambda}\@lamarg#1:\@endlamarg\@ifnextchar\bgroup{.\,\lam}{.\,}}
\def\@lamarg#1:#2\@endlamarg{\if\relax\detokenize{#2}\relax #1\else\@lamvar{\@lameatcolon#2},#1\@endlamvar\fi}
\def\@lamvar#1,#2\@endlamvar{(#2\,{:}\,#1)}
\def\@lameatcolon#1:{#1}
\def\lamu#1{{\lambda}\@lamuarg#1:\@endlamuarg\@ifnextchar\bgroup{.\,\lamu}{.\,}}
\def\@lamuarg#1:#2\@endlamuarg{#1}
\newcommand{\oftype}{\mathord{:}}
\newcommand{\tminctx}[3]{#1 \vdash #2 \, \colon \, #3}
\newcommand{\tcof}{\TypeFont{t}}
\begin{document}

\title[A 2-categorical proof of Frobenius]{A 2-categorical proof of Frobenius \\ for fibrations defined from a generic point}
\author{Sina Hazratpour}

\author{Emily Riehl}

\address{Department of Mathematics\\Johns Hopkins University \\ 3400 N Charles Street \\ Baltimore, MD 21218}
\email{sinahazratpour@gmail.com} 
\email{eriehl@jhu.edu}

\date{\today}

\begin{abstract} Consider a locally cartesian closed category with an object $\II$ and a class of trivial fibrations, which admit sections and are stable under pushforward and retract as arrows. Define the fibrations to be those maps whose Leibniz exponential with the generic point of $\II$ defines a trivial fibration. Then the fibrations are also closed under pushforward.
\end{abstract}

\thanks{This work was supported by the United States Air Force Office of Scientific Research under award number FA9550-21-1-0009. Additional support was provided by the National Science Foundation via the grants DMS-1652600 and DMS-2204304, the Swedish Research Council under grant no.~2016-06596 while the second author was in residence at Institut Mittag-Leffler in Djursholm, Sweden during the Spring of 2022, and also by the National Science Foundation under Grant No.~DMS-1928930, while the second author participated in a program supported by the Mathematical Sciences Research Institute that was held in the summer of 2022 in partnership with the the Universidad Nacional Aut\'onoma de M\'exico. We thank Steve Awodey, Tim Campion, Thierry Coquand, Nicola Gambino, Maru Sarazola, Christian Sattler and Jonathan Weinberger for useful discussions during the preparation of this manuscript, and the anonymous referee for catching several typos and making various other excellent suggestions during the revision process.}

\maketitle

\tableofcontents

\section{Introduction}

Consider a locally cartesian closed category with an object $\II$, thought of as some sort of ``interval''---though in our present context no interval structure is required. Suppose further that the category comes with a class of \emph{trivial fibrations}, which satisfy a few hypotheses to be enumerated below. We define a map $p \colon \AA \to \XX$ to be a \emph{fibration} just when the induced map to the pullback in the naturality square for the evaluation transformation is a trivial fibration:
\[
  \begin{tikzcd} \AA^\II \times \II \arrow[drr, bend left, "\epsilon"] \arrow[ddr, bend right, "{p}^\II \times \II "'] \arrow[dr, dashed, "\delta\To{p}"] \\ &  \AA_{\epsilon} \arrow[d, "\epsilon^*{p}"'] \arrow[r, "{p}^*\epsilon"] \arrow[dr, phantom, "\lrcorner" very near start] & \AA \arrow[d, "{p}"] \\ & \XX^\II\times \II  \arrow[r, "\epsilon"'] & \XX
  \end{tikzcd}    
\]
This notion of fibration was first considered by Thierry Coquand \cite{C}. As noted in Proposition \ref{prop:fibration}, the map $\delta\To{p}$ coincides with the Leibniz exponential in the slice over $\II$ of ${p}$ together with the ``generic point,'' an observation made by Steve Awodey \cite{A}. Our aim in this paper is to show that these fibrations satisfy the \emph{Frobenius property}: that fibrations are closed under pushforward along other fibrations under the following hypotheses:
\begin{enumerate}
\item[(TF1)]\label{item:TF1} Trivial fibrations have sections.
\item[(TF2)]\label{item:TF2} Trivial fibrations satisfy the generalized Frobenius property: they are stable under pushforward, when considered as arrows in a slice category.\footnote{Pushforward defines a functor between slice categories. The \emph{Frobenius property} concerns stability of the objects of these categories under the pushforward functor, while the \emph{generalized Frobenius property} concerns stability of the morphisms under the pushforward functor, which implies the object version \cite[\S 6]{GS}.}
\item[(TF3)]\label{item:TF3} Trivial fibrations are stable under retract in the arrow category.
\end{enumerate}
An analogous result is proven for a different class of fibrations in \cite{GS}, which gives a more comprehensive history of this condition in the setting of (algebraic) weak factorization systems. A weak factorization system satisfies the \emph{Frobenius condition} when its right class is stable under pullback along morphisms in its left class. In a locally cartesian closed category when the fibrations define the right class of a weak factorization, the Frobenius property defined above is logically equivalent to the Frobenius condition of \cite{GS}.

The original proof of this result, due to Thierry Coquand, is written in the internal language of a locally cartesian closed category \cite{ABCFHL}. There was subsequent interest in developing a categorical account of this proof by Steve Awodey and Christian Sattler among others. They described the following proof strategy: when ${p} \colon \AA \to \XX$ is a fibration, one may construct a retract diagram for any ${q} \colon \BB \to \AA$:
\begin{equation}\label{eq:retract}
  \begin{tikzcd}
      (\Pi_\AA\BB)^\II\times\II \arrow[r, "\kappa'"] \arrow[d, "\delta \To {p}_*{q}"'] & \Pi_{\AA^\II\times\II}(\BB^\II\times\II) \arrow[d, "({p}^\II\times\II)_*(\delta\To {q})"]  \arrow[r, "\tau'"] & (\Pi_\AA\BB)^\II\times\II \arrow[d, "\delta \To {p}_*{q}"]\\ (\Pi_\AA \BB)_{\epsilon} \arrow[r, "\kappa"'] & \Pi_{\AA^\II\times\II} (\BB_{\epsilon}) \arrow[r, "\tau"'] & (\Pi_\AA \BB)_{\epsilon}
  \end{tikzcd}\end{equation}
When ${q}$ is a fibration, $\delta\To{q}$ is a trivial fibration and hence so is the middle map in this diagram, by stability under pushforward, and thus also the outer maps, by stability under retract. This is what we want to show. 

It remains to construct the retract diagram \eqref{eq:retract}, which Awodey does in \cite{A} by appealing to the universal properties of the functors involved. We adopt a more equational approach with the aim of simplifying the diagram chases necessary to verify the various commutativity conditions. In \S\ref{sec:proof}, we construct the six maps and prove that both squares commute and that both horizontal composites are identities. This proves our main result, Theorem \ref{thm:coquand}. The proofs of the various commutativity conditions are diagram chases that are greatly simplified by appealing to a theorem of Kelly and Street on the double functoriality of the mates correspondence, recalled in \S\ref{sec:mates}. This result will be applied to the adjoint triples between the slices of a locally cartesian closed category, the notation for which we introduce in \S\ref{sec:lcc}. 

Three supererogatory appendices are included to further the expository aims of this note. In \S\ref{sec:generic}, we elaborate on the claim made above, that our fibrations are defined by  Leibniz exponential in the slice over $\II$ with the generic point. In \S\ref{sec:type-theory}, we give a type-theoretic interpretation of our proof of Theorem \ref{thm:coquand} and compare this with Coquand's original proof. In the final appendix \S\ref{sec:structured}, we prove that under the natural functorial enhancements of our hypotheses on the class of trivial fibrations, that the corresponding structured fibrations admit a \emph{functorial Frobenius operator} in the sense of Gambino--Sattler \cite{GS}. Moreover, we demonstrate that the induced fibration structure on the pushforward is stable under substitution under natural conditions. The functorial formulation of the Frobenius condition is necessary to model the $\Pi$-types of homotopy type theory in a constructive meta-theory. 

While this paper was under review, Reid Barton posted a new proof of the Frobenius condition in the setting where the fibrations as defined here form the right class of a weak factorization system. By arguing  ``on the left''---that is showing that pullback along a fibration preserves the left class---instead of ``on the right'' as we do here, Barton gives a 1-categorical proof of the Frobenius condition \cite{Barton2024}.

\section{The double functoriality of the mates correspondence}\label{sec:mates}

Given a pair of adjunctions $(F \dashv U, \eta, \epsilon)$ and $(L \dashv R, \iota, \nu)$ and a pair of functors as below there is a bijective correspondence between natural transformations as displayed at the upper-left and at the lower-right implemented by pasting with the units and the counits:
\[
  \begin{tikzcd} \cA \arrow[r, "H"] \arrow[d, "F"'] \arrow[dr, phantom, "\Swarrow{\alpha}"] & \cC \arrow[d, "L"] \arrow[dr, phantom, "\mapsto"] & \arrow[dr, phantom, "\Downarrow\epsilon" very near end] & \cA \arrow[r, "H"] \arrow[d, "F" description] \arrow[dr, phantom, "\Swarrow{\alpha}"] & \cC \arrow[d, "L" description] \arrow[r, equals] & \cC \\ \cB \arrow[r, "K"'] & \cD & \cB \arrow[ur, "U"] \arrow[r, equals] & \cB \arrow[r, "K"'] & \cD \arrow[ur, "R"'] & \arrow[ul, phantom, "\Downarrow\iota" very near end]
  \end{tikzcd}
\]

\[
  \begin{tikzcd}
\cA \arrow[r, equals] \arrow[dr, "F"'] & \cA \arrow[dr, phantom, "\Searrow\beta"]\arrow[r, "H"] & \cC \arrow[dr, "L"] & \arrow[dr, phantom, "\rotatebox{180}{$\mapsto$}"] \arrow[dl, phantom, "\Downarrow\nu" very near end] & \cA \arrow[r, "H"] \arrow[dr, phantom, "\Searrow\beta"] & \cC \\ \arrow[ur, phantom, "\Downarrow\eta" very near end] & \cB \arrow[r, "K"'] \arrow[u, "U" description] & \cD \arrow[u, "R" description] \arrow[r, equals] & \cD & \cB \arrow[r, "K"'] \arrow[u, "U"] & \cD \arrow[u, "R"']
  \end{tikzcd}
\]
The corresponding pairs of 2-cells ${\alpha} \colon LH \To KF$ and $\beta \colon HU \To RK$ are called \emph{mates}.

A pasting diagram calculation proves that the mates correspondence is functorial with respect to pasting of squares in both the horizontal and vertical directions, as summarized by the following theorem:

\begin{thm}[{Kelly-Street \cite{kel74}}]\label{thm:KS} Consider the pair of double categories $\Ladj$ and $\Radj$ whose:
  \begin{itemize}
    \item objects are categories,
    \item horizontal arrows are functors,
    \item vertical arrows are fully-specified adjunctions pointing in the direction of the left adjoint, and
    \item squares of 
    \begin{itemize}
      \item $\Ladj$ are natural transformations between the squares of functors formed by the left adjoints.
      \item $\Radj$ are natural transformations between the squares of functors formed by the right adjoints.
    \end{itemize}
  \end{itemize}
  Then there is an isomorphism of double categories $\Ladj \cong \Radj$ that is the identity on objects and horizontal and vertical arrows and acts on squares by taking mates.
\end{thm}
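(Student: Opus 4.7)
The plan is to check the axioms of an isomorphism of double categories in the one dimension where there is content. Since the proposed functor is specified to act as the identity on objects, on horizontal arrows (functors), and on vertical arrows (fully specified adjunctions), and since the composite of two specified adjunctions has a canonical specified unit and counit, the only remaining verifications concern the action on squares. These are: (i) the mates operation is a bijection on squares with any fixed horizontal and vertical boundary; (ii) it sends horizontal identity squares to horizontal identity squares and respects horizontal composition; and (iii) it sends vertical identity squares to vertical identity squares and respects vertical composition.

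For (i), I would construct the inverse explicitly by pasting with the unit $\eta$ of $F \dashv U$ and the counit $\nu$ of $L \dashv R$ on the sides opposite to those used in the forward mates construction. That the two operations are mutually inverse then follows from one application of each triangle identity, as in the classical mates correspondence; no double-categorical content is needed yet.

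For (ii) and (iii), the argument is a pair of pasting diagram calculations. In the horizontal case, I would fix two horizontally composable squares $\alpha_0$ and $\alpha_1$ sharing a middle vertical adjunction $L \dashv R$, and expand both the mate of the pasted square $\alpha_1 \cdot \alpha_0$ and the horizontal pasting of the two individual mates on the common boundary. The two pasting diagrams differ only by the insertion of a counit--unit pair $(\nu,\iota)$ for $L \dashv R$, appropriately whiskered, and a single triangle identity for that adjunction eliminates it. The vertical case proceeds analogously, but the comparison now involves the two middle unit--counit pairs that arise in forming the composite adjunctions $F'F \dashv UU'$ and $L'L \dashv RR'$; here the triangle identities for the two middle adjunctions $F' \dashv U'$ and $L \dashv R$ do the reconciliation. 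The identity-square clauses are immediate, since pasting with an identity unit or counit returns the original square.

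The main obstacle throughout is notational rather than conceptual: the pasting diagrams must be drawn large enough to accommodate all the whiskered units and counits simultaneously, and one must keep track of which adjunction each triangle identity is being applied to. Once the diagrams are assembled, every reduction step is a straightforward instance of naturality, middle-four interchange, or a triangle identity, exactly as in Kelly and Street's original argument.
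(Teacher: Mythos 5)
Your proposal is correct and is exactly the pasting-diagram calculation that the paper alludes to but does not write out: Theorem \ref{thm:KS} is stated there as a known result of Kelly and Street, with the proof deferred to the remark that ``a pasting diagram calculation proves'' the double functoriality. Your decomposition into (i) bijectivity via the two triangle identities and (ii)--(iii) compatibility with horizontal and vertical pasting via cancellation of the inserted whiskered unit--counit pairs for the middle adjunctions is the standard argument and is sound.
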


The upshot is that a pasting equation between diagrams of squares in $\Ladj$ holds if and only if the corresponding pasting equation between diagrams of squares in $\Radj$ holds. We will use this result liberally in what follows to reduce pasting equations to simpler pasting equations.

\begin{ex}\label{ex:co-unit-mates} The units and counits of an adjunction $(F\dashv U, \eta, \epsilon)$ each arise as mates of identity transformations. By Theorem \ref{thm:KS}, the trivial pasting equality involving identity squares below-left recovers, upon taking mates, the triangle identity below-right:
  \[\begin{tikzcd}
   \cA \arrow[r, equals] \arrow[d, equals] & \cA \arrow[d, "F"] \arrow[r, "F"] & \cB \arrow[d, equals] \arrow[dr, phantom, "="] & \cA \arrow[r, "F"] \arrow[d, equals] & \cB \arrow[d, equals] & \arrow[d, phantom, "\leftrightsquigarrow"] & \cA \arrow[d, equals] \arrow[r, equals] \arrow[dr, phantom, "\Searrow\eta"] & \cA \arrow[r, "F"] \arrow[dr, phantom, "\Searrow\epsilon"] & \cB \arrow[d, equals] \arrow[dr, phantom, "="] & \cA \arrow[r, "F"] \arrow[d, equals] & \cB \arrow[d, equals]\\ \cA \arrow[r, "F"']& \cB  \arrow[r, equals] & \cB & \cA \arrow[r, "F"'] & \cB &~ & \cA \arrow[r, "F"'] & \cB \arrow[u, "U"'] \arrow[r, equals] & \cB  & \cA \arrow[r, "F"'] & \cB
  \end{tikzcd}
  \]
  The other triangle identity can be recovered similarly.
\end{ex}

\begin{rmk} Example \ref{ex:co-unit-mates} reveals that the mate of an isomorphism need not be an isomorphism. 
\end{rmk}

\begin{ex}\label{ex:conjugation-as-mates}
The \emph{conjugate} bijection between a natural transformation $\alpha \colon L \To F$ between parallel left adjoints $L \dashv R, F \dashv U$ and a natural transformation $\beta \colon U \To R$ between their right adjoints is a degenerate case of the mates correspondence in which the horizontal functors are identities.
\[
  \begin{tikzcd} \cA \arrow[r, equals] \arrow[d, "F"'] \arrow[dr, phantom, "\Swarrow\alpha"] & \cA \arrow[d, "L"] \arrow[dr, phantom, "\mapsto"] & \arrow[dr, phantom, "\Downarrow\epsilon" very near end] & \cA \arrow[r, equals] \arrow[d, "F" description] \arrow[dr, phantom, "\Swarrow\alpha"] & \cA \arrow[d, "L" description] \arrow[r, equals] & \cA \\ \cB \arrow[r, equals] & \cB & \cB \arrow[ur, "U"] \arrow[r, equals] & \cB \arrow[r, equals] & \cB \arrow[ur, "R"'] & \arrow[ul, phantom, "\Downarrow\iota" very near end]
  \end{tikzcd}
\]

\[
  \begin{tikzcd}
\cA \arrow[r, equals] \arrow[dr, "L"'] & \cA \arrow[dr, phantom, "\Searrow\beta"]\arrow[r, equals] & \cA \arrow[dr, "L"] & \arrow[dr, phantom, "\rotatebox{180}{$\mapsto$}"] \arrow[dl, phantom, "\Downarrow\nu" very near end] & \cA \arrow[r, equals] \arrow[dr, phantom, "\Searrow\beta"] & \cA \\ \arrow[ur, phantom, "\Downarrow\eta" very near end] & \cB \arrow[r, equals] \arrow[u, "U" description] & \cB \arrow[u, "R" description] \arrow[r, equals] & \cB & \cB \arrow[r, equals] \arrow[u, "U"] & \cB \arrow[u, "R"']
  \end{tikzcd}
\]
\end{ex}

\begin{rmk}\label{rmk:mate-invertibility}
While the mate of an isomorphism need not be an isomorphism the conjugate of an isomorphism is necessarily an isomorphism, as conjugate pairs can be understood as 2-cells in the vertical 2-categories contained in the double categories of Theorem \ref{thm:KS}.

In a very special case, something even stronger is true: given an adjunction $(F \dashv U, \eta, \epsilon)$ and conjugate pairs $\alpha \colon F \To F$ and $\beta \colon U \To U$ then $\alpha$ is an identity if and only if $\beta$ is an identity.
\end{rmk}

\section{Adjoint triples and locally cartesian closed categories}\label{sec:lcc}

Consider a triple of adjoint functors
\[
  \begin{tikzcd}
    \cC \arrow[r, bend left=45, "L", "\bot"'] \arrow[r, bend right=45, "R"', "\bot"] & \cD \arrow[l, "U" description]
  \end{tikzcd}
\]
The units and counits
\[\begin{tikzcd} 1_{\cC} \arrow[r, Rightarrow, "\iota"] & UL & LU \arrow[r, "\mu", Rightarrow] & 1_{\cD} & 1_{\cD} \arrow[r, "\eta", Rightarrow] & RU & UR \arrow[r, "\nu", Rightarrow] & 1_{\cC} \end{tikzcd}\]
compose to define units and counits for the composite adjunctions
\[
  \begin{tikzcd} \cC \arrow[r, bend left, "L"] \arrow[r, phantom, "\bot"] & \cD\arrow[l, bend left, "U"] \arrow[r, bend left, "U"] \arrow[r, phantom, "\bot"] & \cC \arrow[l, bend left, "R"] & \cD   \arrow[r, bend left, "U"] \arrow[r, phantom, "\bot"] & \cC \arrow[l, bend left, "R"] \arrow[r, bend left, "L"] \arrow[r, phantom, "\bot"] & \cD \arrow[l, bend left, "U"]
  \end{tikzcd}
\]
With respect to this adjoint triple:

\begin{lem}
The counit $\mu$ of $L \dashv U$  and the unit $\eta$ of $U \dashv R$ are conjugates with respect to $LU \dashv RU$ and the identity adjunction, while the unit $\iota$ of $L \dashv U$ and the counit $\nu$ of $U \dashv R$ are conjugates with respect to $UL \dashv UR$. 
\end{lem}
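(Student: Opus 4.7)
The plan is to compute each conjugate directly from the recipe of Example~\ref{ex:conjugation-as-mates} and then simplify using a triangle identity. The main prerequisite is an explicit description of the units and counits of the composite adjunctions $LU \dashv RU$ and $UL \dashv UR$, obtained from the adjoint triple by whiskering: the unit of $LU \dashv RU$ is the composite $1_\cD \xrightarrow{\eta} RU \xrightarrow{R\iota U} RULU$, and the counit of $UL \dashv UR$ is $ULUR \xrightarrow{U\mu R} UR \xrightarrow{\nu} 1_\cC$.

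For the first claim, the conjugate of $\mu \colon LU \To 1_\cD$ with respect to $LU \dashv RU$ and the identity adjunction on $\cD$ is, by Example~\ref{ex:conjugation-as-mates}, the composite
\[ 1_\cD \xrightarrow{\eta} RU \xrightarrow{R\iota U} RULU \xrightarrow{RU\mu} RU, \]
since the counit of the identity adjunction is trivial. The last two factors combine to $R(U\mu \circ \iota U) = R \cdot 1_U = 1_{RU}$ by the triangle identity for $L \dashv U$, so the whole composite collapses to $\eta$. The second claim is symmetric: the conjugate of $\iota \colon 1_\cC \To UL$ with respect to the identity adjunction on $\cC$ and $UL \dashv UR$ is the composite
\[ UR \xrightarrow{\iota UR} ULUR \xrightarrow{U\mu R} UR \xrightarrow{\nu} 1_\cC, \]
whose first two factors simplify to $(U\mu \circ \iota U)R = 1_{UR}$ by the same triangle identity, leaving $\nu$.

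No step in this argument is difficult; the only thing to be careful about is identifying the units and counits of the composite adjunctions correctly, but these are standard. A more structural alternative would derive both statements from Theorem~\ref{thm:KS} applied to an identity square in $\Ladj$ decomposed vertically using the two component adjunctions of the triple—the decompositions recovering $\iota$ and $\mu$ as mates of identities as in Example~\ref{ex:co-unit-mates}. However, since either route ultimately invokes the same triangle identity, the direct calculation above is the cleanest presentation.
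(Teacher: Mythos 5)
Your proof is correct and is essentially the paper's argument written out equationally: the paper computes the conjugate of $\mu$ by the same pasting with the unit $R\iota U \circ \eta$ of $LU \dashv RU$ and collapses the $\iota$/$\mu$ cells via the triangle identity $U\mu \circ \iota U = 1_U$, then declares the other case dual. Your identifications of the composite unit and counit and both simplifications check out.
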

\begin{proof}
  The first of these identities is verified by the pasting diagram calculation, which computes the conjugate of $\mu$ by pasting with the unit of $LU \dashv RU$:
\[
  \begin{tikzcd}
    \cD \arrow[d, equals]
 \arrow[r, equals] & \cD \arrow[d, "U"] \arrow[r, equals] & \cD \arrow[d, "U"'] \arrow[r, equals] & \cD \arrow[d, equals] \arrow[dl, phantom, "\Swarrow\eta"] \\
\cD \arrow[r, "U"] \arrow[d, equals] & \cC \arrow[dl, phantom, "\Swarrow\mu"]\arrow[r, equals] \arrow[d, "L" description] & \arrow[dl, phantom, "\Swarrow\iota"]\cC \arrow[d, equals] \arrow[r, "R"'] & \cD \arrow[d, equals] \\ \cD \arrow[r, equals] & \cD \arrow[r, "U"'] & \cC \arrow[r, "R"'] & \cD \end{tikzcd} \qquad = \qquad 
\begin{tikzcd}  \cD \arrow[r, equals] \arrow[d, "U"'] & \cD \arrow[dl, phantom, "\Swarrow\eta"] \arrow[d, equals] \\ \cC \arrow[r, "R"'] & \cD
\end{tikzcd}
\]
The others are dual.
\end{proof}

In a locally cartesian closed category, every morphism ${p} \colon \AA \to \XX$ gives rise to an adjoint triple ${p}_! \dashv {p}^* \dashv {p}_*$ between the slice categories over $\AA$ and over $\XX$, which we write as 
\begin{equation}\label{eq:adjoint-triple}
  \begin{tikzcd}
    \slice{\AA} \arrow[r, bend left=45, "{p}_!", "\bot"'] \arrow[r, bend right=45, "{p}_*"', "\bot"] & \slice{\XX} \arrow[l, "{p}^*" description] 
  \end{tikzcd}
\end{equation}
so as to avoid giving an extraneous name for the locally cartesian closed category itself.

We fix the following notation for the units and counits of these adjunctions
\begin{equation}\label{eq:adjoint-triple-unit-counit-pairs} 
\begin{tikzcd}\id_{\slice{\AA}} \arrow[r, Rightarrow, "\iota"] & {p}^*{p}_! & {p}_!{p}^* \arrow[r, "\mu", Rightarrow] & \id_{\slice{\XX}} & \id_{\slice{\XX}} \arrow[r, "\eta", Rightarrow] & {p}_*{p}^* & {p}^*{p}_* \arrow[r, "\nu", Rightarrow] & \id_{\slice{\AA}} \end{tikzcd}
\end{equation}
These natural transformations compose to define units and counits for the composite adjunctions
\[
  \begin{tikzcd} \slice{\AA} \arrow[r, bend left, "{p}_!"] \arrow[r, phantom, "\bot"] & \slice{\XX}\arrow[l, bend left, "{p}^*"] \arrow[r, bend left, "{p}^*"] \arrow[r, phantom, "\bot"] & \slice{\AA} \arrow[l, bend left, "{p}_*"] & \slice{\XX}    \arrow[r, bend left, "{p}^*"] \arrow[r, phantom, "\bot"] & \slice{\AA} \arrow[l, bend left, "{p}_*"] \arrow[r, bend left, "{p}_!"] \arrow[r, phantom, "\bot"] & \slice{\XX} \arrow[l, bend left, "{p}^*"] \arrow[r, phantom, "\eqcolon"] & \slice{\XX} \arrow[r, bend left, "-\times \AA"] \arrow[r, phantom, "\bot"] & \slice{\XX} \arrow[l, bend left, "{(-)^\AA}"]
  \end{tikzcd}
\]
the latter of which defines product with and exponentiation with the object $\AA \in \slice{\XX}$.

\begin{rmk}\label{rmk:pseudofunctoriality}
  It is natural to regard the leftmost adjoints of \eqref{eq:adjoint-triple} as defining a strictly commutative functor from the locally cartesian closed category to categories, sending $\XX$ to $\slice{\XX}$ and ${p} \colon \AA \to \XX$ to ${p}_! \colon \slice{\AA} \to \slice{\XX}$. In particular, $\id_!$ is the identity functor so we may choose $\id^*$ and $\id_*$ to be identities as well.

  In general, for each morphism ${p} \colon \AA \to \XX$, we choose---once and for all---right adjoints ${p}_! \dashv {p}^* \dashv {p}_*$, choosing the identities when ${p}$ is an identity. In this way, passing to slice categories defines a normal pseudofunctor from the locally cartesian closed category to the 2-category of categories, adjoint triples, and conjugate triples of natural transformations, which is a strict functor on the leftmost adjoints.
\end{rmk}

We don't make use of the full pseudofunctoriality of the passage to adjoint triples. We do however make use to the following elementary fact that follows easily from Theorem \ref{thm:KS}.

\begin{lem}\label{lem:pseudofunctoriality}
  Any commutative rectangle in a locally cartesian closed category
  \[
    \begin{tikzcd} \CC \arrow[r, "r"] \arrow[d, "t"'] & \BB \arrow[d, "s"] \arrow[r, "{q}"] & \AA \arrow[d, "{p}"] \\ \ZZ \arrow[r, "y"'] & \YY \arrow[r, "x"'] & \XX
    \end{tikzcd}
  \]
  gives rise to pasting equations between the canonical natural isomorphisms:
  \[
  \begin{tikzcd} \slice{\CC} \arrow[from=r, "r^*"'] \arrow[from=d, "t^*"] \arrow[dr, phantom, "\cong"] & \slice{\BB} \arrow[from=d, "s^*"'] \arrow[from=r, "{q}^*"'] \arrow[dr, phantom, "\cong"] & \slice{\AA} \arrow[from=d, "{p}^*"'] \arrow[dr, phantom, "="] & \slice{\CC} \arrow[from=d, "t^*"] \arrow[from=r, "r^*{q}^*"'] \arrow[dr, phantom, "\cong"] & \slice{\AA} \arrow[from=d, "{p}^*"'] &  \slice{\CC} \arrow[dr, phantom, "\cong"] \arrow[r, "r_*"] \arrow[d, "t_*"'] & \arrow[dr, phantom, "\cong"]\slice{\BB} \arrow[d, "s_*"] \arrow[r, "{q}_*"] & \slice{\AA} \arrow[d, "{p}_*"] \arrow[dr, phantom, "="] & \slice{\CC} \arrow[d, "t_*"'] \arrow[r, "{q}_*r_*"] \arrow[dr, phantom, "\cong"] & \slice{\AA} \arrow[d, "{p}_*"]\\ \slice{\ZZ} \arrow[from=r, "y^*"] & \slice{\YY} \arrow[from=r, "x^*"] & \slice{\XX} & \slice{\ZZ} \arrow[from=r, "y^*x^*"] & \slice{\XX} & \slice{\ZZ} \arrow[r, "y_*"'] & \slice{\YY} \arrow[r, "x_*"'] & \slice{\XX} & \slice{\ZZ} \arrow[r, "x_*y_*"'] & \slice{\XX}
  \end{tikzcd}
\]
\end{lem}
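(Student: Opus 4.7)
The plan is to derive both pasting equations from the strict functoriality of the lower-shriek functors $(-)_!$ recorded in Remark~\ref{rmk:pseudofunctoriality}, using the conjugate bijection (Example~\ref{ex:conjugation-as-mates}) together with the double functoriality of the mates correspondence from Theorem~\ref{thm:KS}.

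For the pullback equation, the first step is to describe each canonical natural isomorphism as a conjugate. The left-square iso $r^*s^* \cong t^*y^*$ witnesses that both functors are right adjoints to the common left adjoint $s_!r_! = y_!t_!$, which is a strict equality since $sr = yt$ and $(-)_!$ is strictly functorial. Conjugation with respect to the parallel adjunctions $s_!r_! \dashv r^*s^*$ and $y_!t_! \dashv t^*y^*$ identifies this iso as the conjugate of the identity transformation on the common left adjoint, and the same description applies to the right square and to the outer rectangle. The pasting equality for pullback then follows because the horizontal pasting of identity $2$-cells is an identity $2$-cell, and conjugation, being a double functor by Theorem~\ref{thm:KS}, transports this trivial equation to the claimed one.

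For the pushforward equation, I apply Theorem~\ref{thm:KS} once more, now with the adjunctions $(-)^* \dashv (-)_*$. The canonical pushforward iso $s_*r_* \cong y_*t_*$ is the conjugate of the pullback iso $t^*y^* \cong r^*s^*$ with respect to the parallel adjunctions $t^*y^* \dashv y_*t_*$ and $r^*s^* \dashv s_*r_*$, and analogously for the other squares and for the outer rectangle. Double functoriality of the mates correspondence then carries the pullback pasting equation just established to the corresponding pushforward pasting equation.

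The only real subtlety is keeping straight which adjunction is in play in each application of conjugation; once the bookkeeping is in place, Theorem~\ref{thm:KS} performs the coherence verification automatically.
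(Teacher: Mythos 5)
Your proposal is correct and follows essentially the same route as the paper: both start from the strict pasting equation between identity transformations on the lower-shriek side and transport it via the double functoriality of the mates correspondence (Theorem \ref{thm:KS}), first to the pullback functors and then, by repeating the procedure, to the pushforwards. The only cosmetic difference is that the paper takes mates direction-by-direction (vertically, then horizontally), which is what literally licenses the horizontal pasting step, whereas you package each square as the conjugate of the identity between the composite diagonal adjunctions; these two descriptions of the canonical isomorphisms agree by the same double functoriality.
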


\begin{proof}
Starting from the pasting equation between the identity natural transformations:
\[
  \begin{tikzcd} \slice{\CC} \arrow[r, "r_!"] \arrow[d, "t_!"'] & \slice{\BB} \arrow[d, "s_!"] \arrow[r, "{q}_!"] & \slice{\AA} \arrow[d, "{p}_!"] \arrow[dr, phantom, "="] & \slice{\CC} \arrow[d, "t_!"'] \arrow[r, "{q}_!r_!"] & \slice{\AA} \arrow[d, "{p}_!"]\\ \slice{\ZZ} \arrow[r, "y_!"'] & \slice{\YY} \arrow[r, "x_!"'] & \slice{\XX} & \slice{\ZZ} \arrow[r, "x_!y_!"'] & \slice{\XX}
  \end{tikzcd}
\]
we may take mates first and the vertical direction and then in the horizontal direction to obtain a pasting equation between the conjugate isomorphisms:
\[
  \begin{tikzcd} \slice{\CC} \arrow[from=r, "r^*"'] \arrow[from=d, "t^*"] \arrow[dr, phantom, "\cong"] & \slice{\BB} \arrow[from=d, "s^*"'] \arrow[from=r, "{q}^*"'] \arrow[dr, phantom, "\cong"] & \slice{\AA} \arrow[from=d, "{p}^*"'] \arrow[dr, phantom, "="] & \slice{\CC} \arrow[from=d, "t^*"] \arrow[from=r, "r^*{q}^*"'] \arrow[dr, phantom, "\cong"] & \slice{\AA} \arrow[from=d, "{p}^*"']\\ \slice{\ZZ} \arrow[from=r, "y^*"] & \slice{\YY} \arrow[from=r, "x^*"] & \slice{\XX} & \slice{\ZZ} \arrow[from=r, "y^*x^*"] & \slice{\XX}
  \end{tikzcd}
\]
Repeating this procedure yields the corresponding pasting equation for the pushforward functors.
\end{proof}

\begin{war}\label{war:pseudofunctoriality}
  While ${q}_!r_! = ({q}r)_!$, the conjugate of this identity transformation typically defines non-identity isomorphisms $r^*{q}^* \cong ({q}r)^*$ and ${q}_*r_* \cong ({q}r)_*$, which form a key component of the pseudofunctor described in Remark \ref{rmk:pseudofunctoriality}. One could obtain analogous pasting equations in which the right-hand squares involved the functors $({q}r)^*$ and $({q}r)_*$ at the cost of composing the left-hand pasted rectangles with these isomorphisms.
\end{war}

\begin{lem}\label{lem:beck-chevalley-isos} For any pullback square in a locally cartesian closed category
  \[
    \begin{tikzcd}  \BB \arrow[d, "s"'] \arrow[dr, phantom, "\lrcorner" very near start] \arrow[r, "{q}"] & \AA \arrow[d, "{p}"] \\  \YY \arrow[r, "x"'] & \XX
    \end{tikzcd}
  \]
  all of the mates of the identity transformation $p_! q_! = x_! s_!$ are isomorphisms including in particular
  \begin{equation}\label{eq:beck-chevalley-isos}
  \begin{tikzcd} \slice{\BB} \arrow[r, "q_!"] & \slice{\AA} & & \slice{\BB} \arrow[d, "s_*"'] \arrow[dr, phantom, "\cong\Nwarrow"] & \slice{\AA} \arrow[l, "q^*"'] \arrow[d, "p_*"] \\ \slice{\YY} \arrow[u, "s^*"] \arrow[r, "x_!"'] \arrow[ur, phantom, "\cong\Searrow"] & \slice{\XX} \arrow[u, "p^*"'] & & \slice{\YY} & \slice{\XX} \arrow[l, "x^*"]
  \end{tikzcd}
  \end{equation}
\end{lem}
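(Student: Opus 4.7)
The plan is to obtain the first displayed Beck--Chevalley isomorphism from a direct pullback calculation, and then deduce the second by conjugation via Remark \ref{rmk:mate-invertibility}. The equality $p q = x s$ yields a strict identity $p_! q_! = x_! s_!$ of functors $\slice{\BB} \to \slice{\XX}$, since --- by the strict functoriality noted in Remark \ref{rmk:pseudofunctoriality} --- both sides are postcomposition with the common composite. This identity fills a square in $\Ladj$ whose horizontal arrows are $q_!$ and $x_!$ and whose vertical adjunctions are $s_! \dashv s^*$ and $p_! \dashv p^*$. Applying Theorem \ref{thm:KS} produces the mate $\alpha \colon q_! s^* \To p^* x_!$, which is the 2-cell displayed on the left of \eqref{eq:beck-chevalley-isos}.

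To see that $\alpha$ is an isomorphism, I would compute pointwise. For $f \colon F \to \YY$, unwinding the unit/counit definition of $\alpha_f$ shows that it is the comparison induced by the universal property of the pullback defining $p^* x_! f$ applied to the cone on $F \xrightarrow{x f} \XX \xleftarrow{p} \AA$ consisting of the canonical projection $s^* F \to F$ and the composite $s^* F \to \BB \xrightarrow{q} \AA$. By the pasting lemma for pullbacks --- the pullback defining $s^* f$ sits atop the given pullback, so the outer rectangle is a pullback --- this induced map is an isomorphism.

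For the second displayed Beck--Chevalley transformation, note that composing adjunctions gives $q_! s^* \dashv s_* q^*$ and $p^* x_! \dashv x^* p_*$. The 2-cell $\beta \colon x^* p_* \To s_* q^*$ on the right of \eqref{eq:beck-chevalley-isos} is precisely the conjugate of $\alpha$ in the sense of Example \ref{ex:conjugation-as-mates}, which can be checked by a short pasting diagram calculation invoking Theorem \ref{thm:KS}. Since $\alpha$ is an isomorphism, Remark \ref{rmk:mate-invertibility} gives invertibility of $\beta$. The assertion that all mates of the identity $p_! q_! = x_! s_!$ are isomorphisms then follows: any such mate arises from $\alpha$ or $\beta$ by further conjugation in the vertical 2-categories of $\Ladj$ and $\Radj$, and conjugation preserves invertibility.

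The main obstacle will be the pointwise unwinding of $\alpha$ in the second paragraph: one must unfold the unit/counit formula for the mate far enough to identify $\alpha_f$ with the pullback-pasting comparison, rather than merely observing that the two objects are canonically isomorphic by some unspecified map. With this identification in hand, invertibility of the remaining mates is essentially automatic from Remark \ref{rmk:mate-invertibility}.
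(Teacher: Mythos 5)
Your proposal is correct and follows essentially the same route as the paper: reduce the right-hand mate to the left-hand one by conjugation via Remark \ref{rmk:mate-invertibility}, then verify the left-hand mate pointwise by identifying its component with the canonical comparison between the iterated pullback and the pullback along the composite, which is invertible by the pasting lemma for pullbacks. The only difference is one of emphasis: you flag the unwinding of the unit/counit formula for the mate as the delicate step, whereas the paper simply asserts that identification.
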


The statement refers to all natural transformations obtained by iteratively applying the mates correspondence, where applicable. For instance, after taking mates twice, the identity transformation $p_!q_! = x_! s_!$ is carried to its conjugate transformation $s^*x^* \cong q^* p^*$, which is an isomorphism by Remark \ref{rmk:mate-invertibility}. The iterated mate $p_*q_* \cong x_* s_*$ is shown to be invertible similarly.  The invertibility of the mates on display in \eqref{eq:beck-chevalley-isos} is more delicate, requiring the pullback condition. We refer to these as the \textbf{Beck-Chevalley isomorphisms} \cite[\S 14]{Pavlovic}.

\begin{proof}
The left-hand and right-hand mates of \eqref{eq:beck-chevalley-isos} are conjugates. Thus by Remark \ref{rmk:mate-invertibility} it suffices to prove the invertibility of the former, which we do by arguing that its component at any $y \colon \ZZ \to \YY$ is an isomorphism. This component is given by the canonical map from the pullback of $y$ over $s$ composed with $q$ to the pullback of $xy$ along $p$. 
\[
\begin{tikzcd}
  \ZZ_p \arrow[drrr, bend left=14, "p^*(xy)"] \arrow[from=dr, dashed, "\cong"'] \arrow[ddr, bend right] \arrow[ddr, phantom, "\lrcorner" very near start] 
 \\ [-1em] & [-1em] \ZZ_s \arrow[r, "s^*y"] \arrow[d] \arrow[dr, phantom, "\lrcorner" very near start] & \BB \arrow[d, "s"'] \arrow[dr, phantom, "\lrcorner" very near start] \arrow[r, "q"] & \AA \arrow[d, "p"] \\
& \ZZ \arrow[r, "y"'] & \YY \arrow[r, "x"'] & \XX
\end{tikzcd}
\]
Since the upper right-hand square is a pullback, both rectangles are pullbacks, proving that this comparison map is invertible as desired.
\end{proof}

\begin{rmk}\label{rmk:lccc}
Now consider a morphism
\[
  \begin{tikzcd} \BB \arrow[dr, "{p}{q}"'] \arrow[rr, "{q}"] & & \AA \arrow[dl, "{p}"] \\ & \XX
  \end{tikzcd}
\]
in $\slice{\XX}$. The morphism ${q}$ induces a conjugate pair of natural transformations 
\[
  \begin{tikzcd} \slice{\XX} \arrow[r, bend left, "-\times \BB"] \arrow[r, bend right, "-\times \AA"'] \arrow[r, phantom, "\Downarrow{q}"] & \slice{\XX} & \slice{\XX} \arrow[r, bend left, "{(-)^\BB}"] \arrow[r, bend right, "{(-)^\AA}"'] \arrow[r, phantom, "\Uparrow{q}"] & \slice{\XX}
  \end{tikzcd}
  \]
  defined as follows:
\[   \begin{tikzcd}
   \slice{\BB} \arrow[dr, equals]  \arrow[rrr, "{({p}{q})_!}"] & & & \slice{\XX} \arrow[d, equals] & 
   & & \slice{\XX} \arrow[r, equals] & \slice{\XX}
   \\ & \slice{\BB} \arrow[r, "{q}_!"] & \slice{\AA} \arrow[r, "{p}_!"] & \slice{\XX} &  
   & \slice{\AA} \arrow[r, equals] \arrow[dr, phantom, "\Searrow\eta"] & \slice{\AA} \arrow[u, "{p}_*"] \arrow[r, phantom, "\cong"] & ~  \\  \arrow[r, phantom, "\cong"] & \slice{\AA} \arrow[u, "{q}^*"] \arrow[r, equals] & \slice{\AA} \arrow[ul, phantom, "\Searrow\mu"] \arrow[u, equals] & & 
   \slice{\XX} \arrow[r, "{p}^*"] & \slice{\AA} \arrow[u, equals] \arrow[r, "{q}^*"'] & \slice{\BB} \arrow[u, "{q}_*"'] \arrow[dr, equals] \\ \slice{\XX} \arrow[uuu, "{({p}{q})^*}"] \arrow[r, equals] & \slice{\XX} \arrow[u, "{p}^*"']  & & & \slice{\XX} \arrow[u, equals] \arrow[rrr, "{({p}{q})^*}"'] & \arrow[u, phantom, "\cong"] & & \slice{\BB} \arrow[uuu, "{({p}{q})_*}"']
  \end{tikzcd}
\]
That is, these transformations are defined as whiskered composites of the counit $\mu$ and unit $\eta$ of ${q}_! \dashv {q}^*$ and ${q}^* \dashv {q}_*$, respectively, padded by the canonical isomorphisms between the functors $({p}{q})^* \cong {q}^*{p}^*$ and ${p}_*{q}_* \cong ({p}{q})_*$. Note also that these isomorphisms are also conjugates, with the isomorphism ${q}^* {p}^*\cong ({p}{q})^*$ also the conjugate of the identity $({p}{q})_! = {p}_!{q}_!$.
\end{rmk}

\section{The proof}\label{sec:proof}

Consider a locally cartesian closed category with a class of trivial fibrations. Pick an object $\II$ which we think of as an ``interval.'' For any $\XX$ we have natural maps 
\begin{equation}\label{eq:varpi-epsilon-components}
  \begin{tikzcd} \XX^\II & \XX^\II \times \II \arrow[l, "\varpi"'] \arrow[r, "\epsilon"] & \XX\end{tikzcd}
\end{equation}
called \textbf{projection} and \textbf{evaluation} that are defined as components of the following composite natural transformations associated to the map $! \colon \II \to \ast$ from \eqref{eq:adjoint-triple-unit-counit-pairs}.

\begin{equation}\label{eq:varpi-epsilon} \varpi \coloneq
\begin{tikzcd} &  & \slice{\II} \arrow[r, "!_!"] \arrow[dr, phantom, "\Searrow\mu"] & \slice{\ast} \arrow[d, equals] \\ \slice{\ast} \arrow[r, "!^*"] & \slice{\II}  \arrow[r, "!_*"] & \slice{\ast} \arrow[u, "!^*"] \arrow[r, equals] & \slice{\ast}
\end{tikzcd} \qquad
 \qquad \epsilon \coloneq
  \begin{tikzcd} \slice{\ast} \arrow[r, "!^*"] \arrow[dr, phantom, "\Searrow\nu"] & \slice{\II} \arrow[r, "!_!"] \arrow[d, equals]& \slice{\ast} \arrow[d, equals] \\ \slice{\II} \arrow[u, "!_*"] \arrow[r, equals] & \slice{\II} \arrow[r, "!_!"] \arrow[dr, phantom, "\Searrow\mu"] & \slice{\ast} \arrow[d, equals] \\ \slice{\ast} \arrow[u, "!^*"] \arrow[r, equals] & \slice{\ast} \arrow[u, "!^*"] \arrow[r, equals] & \slice{\ast}
  \end{tikzcd}
\end{equation}

\begin{defn}\label{defn:fibration} A map ${p} \colon \AA \to \XX$ is a \emph{fibration} just when the induced map $\delta\To{p}$ to the pullback in the naturality square for the evaluation transformation is a trivial fibration:
\begin{equation}\label{eq:fibration}
  \begin{tikzcd} \AA^\II \times \II \arrow[drr, bend left, "\epsilon"] \arrow[ddr, bend right, "{p}^\II \times \II "'] \arrow[dr, dashed, "\delta\To{p}"] \\ &  \AA_{\epsilon} \arrow[d, "\epsilon^*{p}"'] \arrow[r, "{p}^*\epsilon"] \arrow[dr, phantom, "\lrcorner" very near start] & \AA \arrow[d, "{p}"] \\ & \XX^\II\times \II  \arrow[r, "\epsilon"'] & \XX
  \end{tikzcd}    
\end{equation}
The map $\delta\To{p}$ encodes ``evaluation at a generic 
point'' in $\II$ in a sense elaborated upon in Appendix \ref{sec:generic}. 
\end{defn}

Our aim is to prove the following theorem.

\begin{thm}[Coquand]\label{thm:coquand}
  Consider a locally cartesian closed category with an object $\II$ and a class of trivial fibrations, which admit sections and are stable under pushforward and retract. Then the fibrations are also closed under pushforward.
\end{thm}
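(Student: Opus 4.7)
The plan is to follow the strategy outlined just after the statement of the theorem in the introduction: construct the retract diagram \eqref{eq:retract} exhibiting $\delta \To p_* q$ as a retract of the middle vertical map $(p^\II \times \II)_*(\delta \To q)$. With that diagram in hand, the theorem is immediate. When $q$ is a fibration, $\delta \To q$ is a trivial fibration by Definition \ref{defn:fibration}; the generalized Frobenius hypothesis makes its pushforward $(p^\II \times \II)_*(\delta \To q)$ a trivial fibration; stability under retract then makes $\delta \To p_* q$ a trivial fibration, which is precisely the statement that $p_* q$ is a fibration.

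The substance of the proof is therefore the construction of the four horizontal maps $\kappa, \kappa', \tau, \tau'$ and the verification of the two square commutativities together with the two retract identities $\tau\kappa = \id$ and $\tau'\kappa' = \id$. My approach is to define each of these maps as a whiskered composite of units, counits, and Beck-Chevalley isomorphisms for the adjoint triples $p_! \dashv p^* \dashv p_*$, $q_! \dashv q^* \dashv q_*$, and $!_! \dashv !^* \dashv !_*$ for ${!} \colon \II \to \ast$, noting that the units and counits of the latter already underlie $\varpi$ and $\epsilon$ by \eqref{eq:varpi-epsilon}. The two pullback squares appearing in the definitions of $\AA_\epsilon$ and $(\Pi_\AA\BB)_\epsilon$ supply Beck-Chevalley isomorphisms via Lemma \ref{lem:beck-chevalley-isos}, and these will be the glue identifying the equivalent presentations of the four corner objects of \eqref{eq:retract}.

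The key leverage will come from Theorem \ref{thm:KS}: each of the four required identities translates, by taking mates, into a pasting equation in $\Ladj$ between squares of left adjoints. At that level the equations should reduce to triangle identities together with the pseudofunctorial coherence of Remark \ref{rmk:pseudofunctoriality} as formalized in Lemma \ref{lem:pseudofunctoriality}. This is the payoff of the equational setup developed in \S\ref{sec:mates}--\S\ref{sec:lcc}: diagram chases that would be intractable at the level of underlying morphisms become formal manipulations of pasting diagrams of slice functors.

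The main obstacle I anticipate is not any single clever step but the bookkeeping of choosing canonical presentations. Each corner of \eqref{eq:retract} is isomorphic to several equivalent composites via Beck-Chevalley, and each of the six arrows likewise admits multiple equivalent descriptions. The real content is selecting presentations so that the arrows are manifestly well-typed and so that, after taking mates, the four required equations become triangle identities. Once such a choice is pinned down, the remaining verifications should be essentially formal applications of Theorem \ref{thm:KS} and the triangle identities.
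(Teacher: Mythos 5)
Your overall strategy is the paper's strategy: build the retract diagram \eqref{eq:retract}, then conclude by stability of trivial fibrations under pushforward and retract. Your description of $\kappa$ and $\kappa'$, and of the verification of the left-hand square, also matches the paper: these are mates of identity transformations and whiskered counits, and the left-hand square reduces, via Theorem \ref{thm:KS}, to a pasting equation between identities (this is Lemma \ref{lem:canonical-counit-iso} and Lemma \ref{lem:section-square}).

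However, there is a genuine gap in your plan for $\tau$ and $\tau'$. You propose to define \emph{all} six maps as whiskered composites of units, counits, and Beck--Chevalley isomorphisms, and to reduce all four identities to triangle identities plus pseudofunctorial coherence. This cannot work for the retractions: if $\tau$ and $\tau'$ were available by purely formal means, the retract diagram would exist for an arbitrary map $p$, and every pushforward along every map would be a fibration, which is false. The hypothesis that $p$ is a fibration must enter somewhere, and your proposal never locates where. In the paper it enters exactly in the construction of $\tau$ (Lemma \ref{lem:section-retraction}): since $p$ is a fibration, $\delta\To p$ is a trivial fibration, and hypothesis (i) provides a section $s \colon \AA_\epsilon \to \AA^\II\times\II$ with $(\delta\To p)\cdot s = \id$. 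The commutative diagram \eqref{eq:section} built from $s$ lets one factor the Beck--Chevalley isomorphism of the pullback square defining $\AA_\epsilon$ through $s^*$; taking mates of that factorization produces a natural transformation $\sigma$ involving $s_*$, and $\tau$ is $\sigma$ padded by the Beck--Chevalley isomorphisms. The identity $\tau\kappa = \id$ is then a consequence of the section equation $(\delta\To p)\cdot s = \id$ propagated through the mates correspondence; it is not a triangle identity. The map $\tau'$ is obtained from $\tau$ by pasting with the inverses of the canonical isomorphisms \eqref{eq:canonical-isos}, and $\tau'\kappa'=\id$ follows from $\tau\kappa=\id$ together with the fact that those isomorphisms are pairwise inverse. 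To complete your proof you must supply this construction of $\tau$ from the section, which is the one place the fibrancy of $p$ (and the hypothesis that trivial fibrations admit sections) is actually used.
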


Our proof follows the outline described in the introduction. It remains only to construct the retract diagram \eqref{eq:retract}, which we achieve over a series of lemmas.

\begin{lem}
  The component of the whiskered counit 
  \begin{equation}\label{eq:leibniz-map}\begin{tikzcd} &  \slice{\XX^\II} \arrow[r, "\varpi^*"] \arrow[dr, phantom, "\Downarrow\nu"] & \slice{\XX^\II \times \II} \arrow[d, equals]\\ 
    \slice{\XX} \arrow[r, "\epsilon^*"] & \slice{\XX^\II \times \II} \arrow[r, equals] \arrow[u, "\varpi_*"]  & \slice{\XX^\II \times \II}
  \end{tikzcd}
\end{equation}
  at ${p} \colon \AA \to \XX$ is the map $\delta \To {p}$.
\end{lem}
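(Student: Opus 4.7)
The plan is to directly identify the component of $\nu\epsilon^*$ at $p$ with the universally-defined map $\delta\To p$. The argument proceeds in two stages: first, compute the object $\varpi^*\varpi_*(\epsilon^* p)$ explicitly; second, identify the counit morphism itself using the universal property of the pullback $\AA_\epsilon$.

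For the first stage, I would show that $\varpi_*(\epsilon^* p) \cong p^\II \colon \AA^\II \to \XX^\II$ in $\slice{\XX^\II}$. This is essentially the relative exponential version of the definition of $\AA^\II$: by the adjunction $\varpi^* \dashv \varpi_*$, maps $z \colon Z \to \XX^\II$ into $\varpi_*(\epsilon^* p)$ over $\XX^\II$ correspond to maps $\varpi^* z \to \epsilon^* p$ over $\XX^\II \times \II$, which by the pullback property of $\AA_\epsilon$ correspond to factorizations of the composite $Z \times \II \to \XX^\II \times \II \xrightarrow{\epsilon} \XX$ through $p$, and hence via the exponential adjunction to maps $Z \to \AA^\II$ over $\XX^\II$. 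Pulling back along $\varpi$ then yields $\varpi^*\varpi_*(\epsilon^* p) \cong (p^\II \times \II) \colon \AA^\II \times \II \to \XX^\II \times \II$.

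For the second stage, the counit $\nu_{\epsilon^* p}$ is a morphism in $\slice{\XX^\II \times \II}$ from $(\AA^\II \times \II,\, p^\II \times \II)$ to $(\AA_\epsilon,\, \epsilon^* p)$, and by the universal property of $\AA_\epsilon$ it is determined by its projection to $\AA$. Tracing through the chain of transposes above, starting from the identity on $\varpi_*(\epsilon^* p) = p^\II$, identifies this projection as the evaluation map $\epsilon \colon \AA^\II \times \II \to \AA$. Thus the counit is the unique map into $\AA_\epsilon$ with legs $p^\II \times \II$ to $\XX^\II \times \II$ and $\epsilon$ to $\AA$, which is precisely the defining property of $\delta\To p$ in diagram \eqref{eq:fibration}.

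The main obstacle is bookkeeping: tracking the adjoint transposes carefully to confirm that the counit lands on the canonical evaluation $\epsilon$ rather than some twist by an isomorphism. Once this is verified, the universal property of $\AA_\epsilon$ forces $\nu_{\epsilon^* p} = \delta\To p$ automatically.
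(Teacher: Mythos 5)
Your proposal is correct and follows essentially the same route as the paper: both identify $\varpi^*\varpi_*\epsilon^*$ with $p\mapsto p^\II\times\II$ via the adjunctions, then use the universal property of the pullback $\AA_\epsilon$ to reduce the identification of the counit to checking that its projection to $\AA$ is the evaluation counit of the composite adjunction $\epsilon_!\varpi^*\dashv\varpi_*\epsilon^*$. The only difference is cosmetic---you compute $\varpi_*(\epsilon^*p)\cong p^\II$ by a direct representability argument where the paper phrases it in terms of the explicit left adjoint $\epsilon_!\varpi^*\varpi_!$.
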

\begin{proof}
  The domain $\varpi^*\varpi_*\epsilon^*$ of the natural transformation \eqref{eq:leibniz-map} is right adjoint to the functor $\epsilon_!\varpi^*\varpi_!$. The composite left adjoint $\epsilon_!\varpi^*$ sends $x \colon \ZZ \to \XX^\II$ to its transpose $\epsilon \cdot (x \times \II) \colon \ZZ \times \II \to \XX$, so the right adjoint $\varpi_*\epsilon^*$ sends ${p} \colon \AA \to \XX$ to ${p}^\II \colon \AA^\II\to \XX^\II$. Thus, the functor $\varpi^*\varpi_*\epsilon^*$ sends $p$ to $p^{\II} \times \II$.

  This identifies the whiskered natural transformation $\nu\epsilon^*$ as a map from $\AA^\II \times \II$ to $\AA_\epsilon$ over $\XX^\II \times \II$. By the universal property of the pullback that defines $\AA_\epsilon$, to identify this map it suffices to consider the pasted composite with $\mu \colon \epsilon_! \epsilon^* \To \id$ since this pasting corresponds to composing with the pullback square and considering the resulting map $\AA^\II \times \II \to \AA$ over $\XX$. But then the pasted composite in question

\[  \begin{tikzcd} \slice{\XX^\II} \arrow[r, "\varpi^*"] \arrow[dr, phantom, "\Searrow\nu"] & \slice{\XX^\II\times\II} \arrow[r, "\epsilon_!"] \arrow[d, equals]& \slice{\XX} \arrow[d, equals] \\ \slice{\XX^\II\times\II} \arrow[u, "\varpi_*"] \arrow[r, equals] & \slice{\XX^\II \times\II} \arrow[r, "\epsilon_!"] \arrow[dr, phantom, "\Searrow\mu"] & \slice{\XX} \arrow[d, equals] \\ \slice{\XX} \arrow[u, "\epsilon^*"] \arrow[r, equals] & \slice{\XX} \arrow[u, "\epsilon^*"] \arrow[r, equals] & \slice{\XX}
  \end{tikzcd}\]
  is the counit for the composite adjunction $\epsilon_!\varpi^* \dashv \varpi_*\epsilon^*$. The left adjoint of this adjunction sends $x \colon \ZZ \to \XX^\II$ to its transpose $\epsilon \cdot (x \times \II) \colon \ZZ \times \II \to \XX$ while the right adjoint sends ${p} \colon \AA \to \XX$ to ${p}^\II \colon \AA^\II\to \XX^\II$ so the counit map is the map $\epsilon \colon \AA^\II \times \II \to \AA$ over $\XX$ as claimed.
\end{proof}

The commutative square below-left, induces a commutative square between the left adjoints below-center. 
\[
  \begin{tikzcd}
\AA^\II \times \II \arrow[d, "{p}^\II \times \II"'] \arrow[r, "\epsilon"] & \AA \arrow[d, "{p}"] \arrow[drr, phantom, "\rightsquigarrow"]  & & \slice{\AA^\II \times \II} \arrow[d, "({p}^\II \times \II)_!"'] \arrow[r, "\epsilon_!"] & \slice{\AA} \arrow[d, "{p}_!"]  \arrow[drr, phantom, "\rightsquigarrow"] & &  \slice{\AA^\II \times \II} \arrow[d, "({p}^\II \times \II)_*"'] \arrow[from=r, "\epsilon^*"'] \arrow[dr, phantom, "\Uparrow\kappa"] & \slice{\AA} \arrow[d, "{p}_*"]   \\ \XX^\II \times \II \arrow[r, "\epsilon"'] & \XX   & & \slice{\XX^\II \times \II} \arrow[r, "\epsilon_!"'] & \slice{\XX}    && \slice{\XX^\II \times \II} \arrow[from=r, "\epsilon^*"] & \slice{\XX}    
  \end{tikzcd}
\]
After taking mates first in the vertical direction, then in the horizontal direction, and then in the vertical direction again, we obtain a canonical natural transformation displayed above-right whose component at ${q} \colon \BB \to \AA$ defines a map $\kappa \colon (\Pi_\AA\BB)_\epsilon \to \Pi_{\AA^\II \times \II} (\BB_\epsilon)$.

Starting from the pullback square
\[
  \begin{tikzcd} \AA^\II \times \II \arrow[r, "\varpi"] \arrow[d, "{p}^\II \times \II"'] \arrow[dr, phantom, "\lrcorner" very near start] & \AA^\II \arrow[d, "{p}^\II"] \\ \XX^\II \times \II \arrow[r, "\varpi"'] & \XX^\II
  \end{tikzcd}
\]
by Lemma \ref{lem:beck-chevalley-isos}, 
a similar process---taking successive mates of identity transformations---defines a pair of isomorphisms displayed below:
\begin{equation}\label{eq:canonical-isos}
  \begin{tikzcd} \slice{\AA^\II \times \II} \arrow[d, "{({p}^\II \times \II)_*}"'] \arrow[dr, phantom, "\cong"] & \slice{\AA^\II} \arrow[dr, phantom, "\cong"] \arrow[d, "{p}^\II_*" description] \arrow[l, "\varpi^*"']& \arrow[l, "\varpi_*"'] \slice{\AA^\II \times \II} \arrow[d, "{({p}^\II\times\II)_*}"] \\ \slice{\XX^\II \times \II} & \slice{\XX^\II}\arrow[l, "\varpi^*"] & \arrow[l, "\varpi_*"] \slice{\XX^\II \times \II} 
  \end{tikzcd}
\end{equation}

\begin{lem}\label{lem:canonical-counit-iso}
  The pasted composites are equal
  \[
    \begin{tikzcd} 
      \slice{\AA^\II \times \II} \arrow[d, equals] \arrow[rr, equals] & \arrow[d, phantom, "\Uparrow\nu"] & \slice{\AA^\II \times \II} \arrow[d, equals] & \slice{\AA^\II \times \II} \arrow[rr, equals] \arrow[d, "({p}^\II \times \II)_*"'] & & \slice{\AA^\II \times \II} \arrow[d, "{({p}^\II \times \II)_*}"] \\ \slice{\AA^\II \times \II} \arrow[d, "{({p}^\II \times \II)_*}"'] \arrow[dr, phantom, "\cong"] & \slice{\AA^\II} \arrow[dr, phantom, "\cong"] \arrow[d, "{p}^\II_*" description] \arrow[l, "\varpi^*"']& \arrow[l, "\varpi_*"'] \slice{\AA^\II \times \II} \arrow[d, "{({p}^\II\times\II)_*}"] \arrow[r, phantom, "="] & \slice{\XX^\II \times \II} \arrow[d, equals] \arrow[rr, equals] & \arrow[d, phantom, "\Uparrow\nu"] & \slice{\XX^\II \times \II} \arrow[d, equals] \\ \slice{\XX^\II \times \II} & \slice{\XX^\II}\arrow[l, "\varpi^*"] & \arrow[l, "\varpi_*"] \slice{\XX^\II \times \II} & \slice{\XX^\II \times \II} & \slice{\XX^\II}\arrow[l, "\varpi^*"] & \arrow[l, "\varpi_*"] \slice{\XX^\II \times \II} 
    \end{tikzcd}
  \]
\end{lem}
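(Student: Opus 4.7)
The proof is an application of Theorem \ref{thm:KS}, the double functoriality of the mates correspondence. The central observation is that every 2-cell appearing in the two pasting diagrams of the lemma is, up to composition, a mate of an identity transformation. Indeed, by Example \ref{ex:co-unit-mates}, the counit $\nu \colon \varpi^* \varpi_* \To \id$ of the adjunction $\varpi^* \dashv \varpi_*$ (in both the upstairs and downstairs slices) is the mate of an identity 2-cell. The canonical isomorphisms of \eqref{eq:canonical-isos}, by their construction as described in the paragraph preceding them, are iterated mates of the identity transformation $p^\II_! \varpi_! = \varpi_! (p^\II\times\II)_!$ that witnesses the strict functoriality of the lower-shriek assignment (Remark \ref{rmk:pseudofunctoriality}) applied to the pullback square.

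My plan is to take mates of both pasting diagrams in the lemma until both reduce to pasting diagrams of identity 2-cells in $\Ladj$. Concretely, I would first take mates along the vertical adjunctions $({p}^\II\times\II)^*\dashv ({p}^\II\times\II)_*$ (on the left and right columns) and $(p^\II)^*\dashv (p^\II)_*$ (in the middle column). By Theorem \ref{thm:KS}, the equation is equivalent to its image under this mate operation, and the transformed diagrams have each $(-)_*$ replaced by $(-)^*$ with every 2-cell replaced by its mate; in particular the canonical isomorphisms become the canonical (and in this pullback case, invertible) Beck-Chevalley transformations at the $(-)^*$-level, and the counits $\nu$ become units $\iota$ for the conjugate adjunctions. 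Further mate operations along the horizontal adjunctions $\varpi_!\dashv\varpi^*$ then strip away the remaining units, and both pasting diagrams reduce to pasting composites of the single identity 2-cell $p^\II_!\varpi_!=\varpi_!(p^\II\times\II)_!$ in $\Ladj$. Such composites of identities are trivially equal, so the original equation follows by the invertibility half of Theorem \ref{thm:KS}.

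The main obstacle is bookkeeping. The two pasting diagrams each contain three nontrivial 2-cells (one counit and two canonical isomorphisms) whose sources and targets must be aligned with the source/target columns of the various adjunctions before each mate step can be applied; equivalently, one must verify that the rectangles one wishes to mate are genuine squares of functors equipped with compatible adjunctions. This is purely mechanical but requires care, especially in fixing the orientation of each adjunction (the relevant left adjoint on the $(-)^*\dashv(-)_*$ side points in the opposite direction from the one on the $(-)_!\dashv(-)^*$ side). Once the orientations are pinned down, Theorem \ref{thm:KS} does all the work, and the rest of the argument consists of observing that the resulting pasting diagrams in $\Ladj$ literally coincide.
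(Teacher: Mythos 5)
Your proposal is correct and follows essentially the same route as the paper: the paper likewise proves the lemma by iterated applications of Theorem \ref{thm:KS} (mates in the vertical direction, then the horizontal, then the vertical again), converting the pushforwards to the corresponding $(-)_!$ functors and the counits $\nu$ to units $\iota$, until the asserted equation reduces to a pasting equation between identity $2$-cells coming from the strict functoriality of $(-)_!$. The bookkeeping you flag is exactly what the paper's two displayed intermediate equations carry out.
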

\begin{proof}
  Taking mates with respect to the vertical adjunctions, then the horizontal adjunctions, and then the vertical adjunctions again yields the pasting equation:
\[
  \begin{tikzcd} \slice{\AA^\II \times \II} \arrow[d, "{({p}^\II \times \II)_!}"']  & \slice{\AA^\II} 
  \arrow[dr, phantom, "\cong"] 
  \arrow[d, "{p}^\II_!" description] \arrow[from=l, "\varpi_!"]& \arrow[from=l, "\varpi^*"] \slice{\AA^\II \times \II} \arrow[d, "{({p}^\II\times\II)_!}" description]  & \slice{\AA^\II \times \II} & \slice{\AA^\II} \arrow[from=l, "\varpi_!"] & \slice{\AA^\II \times \II} \arrow[from=l, "\varpi^*"]  \\ \slice{\XX^\II \times \II} & \slice{\XX^\II}\arrow[from=l, "\varpi_!"'] & \arrow[from=l, "\varpi^*"'] \slice{\XX^\II \times \II} \arrow[r, phantom, "\overset{?}{=}"] & 
    \slice{\AA^\II \times \II} \arrow[d, "({p}^\II \times\II)_!"'] \arrow[u,equals] \arrow[rr, equals] & \arrow[u, phantom, "\Uparrow\iota"] &\arrow[u, equals] \slice{\AA^\II \times \II} \arrow[d, "({p}^\II \times\II)_!"'] \\
    \slice{\XX^\II \times \II} \arrow[u,equals] \arrow[rr, equals] & \arrow[u, phantom, "\Uparrow\iota"] &\arrow[u, equals] \slice{\XX^\II \times \II} & 
    \slice{\XX^\II \times \II} \arrow[rr, equals] & & \slice{\XX^\II \times \II} 
  \end{tikzcd}
\]
which holds if and only if 
\[
  \begin{tikzcd} \slice{\AA^\II \times \II} \arrow[d, equals] \arrow[r, "{({p}^\II \times \II)_!}"] & \slice{\XX^\II \times \II} \arrow[d,equals] \arrow[r, "\varpi_!"] \arrow[dr, phantom, "\Uparrow\iota"] & \slice{\XX^\II} \arrow[d, "\varpi^*"] \arrow[dr,phantom, "\overset{?}{=}"] & \slice{\AA^\II \times \II} \arrow[d, equals] \arrow[r, "\varpi_!"] \arrow[dr, phantom, "\Uparrow\iota"] & \slice{\AA^\II} \arrow[d, "\varpi^*"] \arrow[r, "{{p}^\II_!}"] \arrow[dr, phantom, "\cong"] & \slice{\XX^\II} \arrow[d, "\varpi^*"] \\
    \slice{\AA^\II \times \II} \arrow[r, "{({p}^\II \times \II)_!}"'] & \slice{\XX^\II \times \II} \arrow[r, equals] & \slice{\XX^\II \times \II} & \slice{\AA^\II \times \II} \arrow[r, equals] & \slice{\AA^\II \times \II} \arrow[r, "{({p}^\II \times \II)_!}"'] & \slice{\XX^\II \times \II}
  \end{tikzcd}
\]
This is because we can post-compose the first pasting equality with the inverse of the displayed Beck-Chevalley isomorphism  to get the second pasting equality, and vice versa. Upon taking mates in the vertical direction this reduces to a pasting equation between identities, which of course holds.
\end{proof}

The natural isomorphism \eqref{eq:canonical-isos} composes with $\kappa$ to define a natural transformation
\begin{equation}\label{eq:top-section}
  \begin{tikzcd} \slice{\AA^\II \times \II} \arrow[d, "{({p}^\II \times \II)_*}"'] \arrow[dr, phantom, "\cong"] & \slice{\AA^\II} \arrow[dr, phantom, "\cong"] \arrow[d, "{p}^\II_*" description] \arrow[l, "\varpi^*"']& \arrow[l, "\varpi_*"'] \slice{\AA^\II \times \II} \arrow[d, "{({p}^\II\times\II)_*}" description] \arrow[dr, phantom, "\Uparrow\kappa"] & \arrow[l, "\epsilon^*"']\slice{\AA} \arrow[d, "{p}_*"] \\ \slice{\XX^\II \times \II} & \slice{\XX^\II}\arrow[l, "\varpi^*"] & \arrow[l, "\varpi_*"] \slice{\XX^\II \times \II} & \slice{\XX} \arrow[l, "\epsilon^*"]
  \end{tikzcd}
\end{equation} whose component 
 at
${q} \colon \BB \to \AA$ defines a map $\kappa' \colon (\Pi_\AA\BB)^\II \times \II \to \Pi_{\AA^\II \times \II} ( \BB^\II \times \II)$.

\begin{lem}\label{lem:section-square}
  For any ${p} \colon \AA \to \XX$ and ${q} \colon \BB \to \AA$, the square of maps in $\slice{\XX^\II \times \II}$ commutes:
  \[
    \begin{tikzcd}
        (\Pi_\AA\BB)^\II\times\II \arrow[r, "\kappa'"] \arrow[d, "\delta \To {p}_*{q}"'] & \Pi_{\AA^\II\times\II}\BB^\II\times\II \arrow[d, "({p}^\II\times\II)_*(\delta\To {q})"]  \\ (\Pi_\AA \BB)_{\epsilon} \arrow[r, "\kappa"'] & \Pi_{\AA^\II\times\II} (\BB_{\epsilon}) 
    \end{tikzcd}\]
\end{lem}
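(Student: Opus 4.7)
The plan is to express both composites in the square as components at $q$ of parallel natural transformations between the functors $\varpi^*\varpi_*\epsilon^*{p}_*$ and $({p}^\II\times\II)_*\epsilon^*$ from $\slice{\AA}$ to $\slice{\XX^\II\times\II}$, and then to verify the equality of these two natural transformations using the interchange law together with Lemma \ref{lem:canonical-counit-iso}.

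To set this up, I would invoke the preceding lemma, which identifies $\delta\To{p}_*{q}$ as the component at $q$ of the whiskered counit $\nu\epsilon^*{p}_*\maps\varpi^*\varpi_*\epsilon^*{p}_* \To \epsilon^*{p}_*$, and identifies $\delta\To{q}$ similarly as the component at $q$ of $\nu\epsilon^*$. Consequently the bottom-left composite of the square is the $q$-component of $\kappa \circ (\nu\epsilon^*{p}_*)$, while the top-right composite is the $q$-component of $(({p}^\II\times\II)_*\nu\epsilon^*) \circ \kappa'$. Thus it suffices to prove the equality
\[
\kappa \circ (\nu\epsilon^*{p}_*) = (({p}^\II\times\II)_*\nu\epsilon^*) \circ \kappa'
\]
of natural transformations $\varpi^*\varpi_*\epsilon^*{p}_* \To ({p}^\II\times\II)_*\epsilon^*$.

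The next step is to apply the interchange law to the 2-cells $\nu\maps\varpi^*\varpi_* \To 1$ and $\kappa\maps\epsilon^*{p}_* \To ({p}^\II\times\II)_*\epsilon^*$, which yields
\[
\kappa \circ (\nu\epsilon^*{p}_*) = (\nu({p}^\II\times\II)_*\epsilon^*) \circ (\varpi^*\varpi_*\kappa).
\]
I would then unpack the definition of $\kappa'$ in \eqref{eq:top-section}, factoring it as $(\sigma\epsilon^*) \circ (\varpi^*\varpi_*\kappa)$, where $\sigma\maps \varpi^*\varpi_*({p}^\II\times\II)_* \To ({p}^\II\times\II)_*\varpi^*\varpi_*$ denotes the composite of the two canonical isomorphisms of \eqref{eq:canonical-isos}. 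Lemma \ref{lem:canonical-counit-iso} asserts exactly that $\nu({p}^\II\times\II)_* = (({p}^\II\times\II)_*\nu) \circ \sigma$; whiskering with $\epsilon^*$ and substituting closes the gap between the two expressions.

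The main obstacle is bookkeeping: correctly identifying the direction of the canonical isomorphisms of \eqref{eq:canonical-isos} and confirming that the $\sigma$ implicit in the definition \eqref{eq:top-section} of $\kappa'$ coincides with the pasting appearing in Lemma \ref{lem:canonical-counit-iso}. The double functoriality of mates (Theorem \ref{thm:KS}) is the organizing principle that makes this alignment transparent, as it was in the proof of Lemma \ref{lem:canonical-counit-iso} itself. Once the alignment is settled, the argument reduces to the two-step 2-cell manipulation sketched above.
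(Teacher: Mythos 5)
Your proposal is correct and is essentially the paper's own argument written out equationally: both composites are identified as components of whiskered pastings via the preceding lemma, and the equality is reduced to Lemma \ref{lem:canonical-counit-iso} by splitting off the $\kappa$-square (your interchange step) from the $\varpi^*\varpi_*$-part. The paper presents this as a single pasting-diagram equality that ``follows immediately'' from Lemma \ref{lem:canonical-counit-iso}; your version makes the implicit interchange and the factorization of $\kappa'$ through the isomorphisms of \eqref{eq:canonical-isos} explicit, but the decomposition and the key lemma are the same.
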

\begin{proof}
  Here the top-right natural transformation in the statement is the left pasted composite, while the left-bottom natural transformation is the right pasted composite:
  \[
  \begin{tikzcd}
    \slice{\AA^\II \times \II} \arrow[d, equals] \arrow[rr, equals] & \arrow[d, phantom, "\Uparrow\nu"] & \slice{\AA^\II \times \II} \arrow[d, equals] & \slice{\AA} \arrow[d, equals]\arrow[l, "\epsilon^*"']  &  \slice{\AA^\II \times \II}\arrow[d, "({p}^\II \times \II)_*"'] \arrow[rr, equals] & & \slice{\AA^\II \times \II} \arrow[d, "({p}^\II \times \II)_*"'] \arrow[dr, phantom, "\Uparrow\kappa"] & \slice{\AA} \arrow[l, "\epsilon^*"'] \arrow[d, "{p}_*"]  \\ \slice{\AA^\II \times \II} \arrow[d, "{({p}^\II \times \II)_*}"'] \arrow[dr, phantom, "\cong"] & \slice{\AA^\II} \arrow[dr, phantom, "\cong"] \arrow[d, "{p}^\II_*" description] \arrow[l, "\varpi^*"']& \arrow[l, "\varpi_*"'] \slice{\AA^\II \times \II} \arrow[d, "{({p}^\II\times\II)_*}" description] \arrow[dr, phantom, "\Uparrow\kappa"] & \arrow[l, "\epsilon^*"']\slice{\AA} \arrow[d, "{p}_*"] \arrow[r, phantom, "\overset{?}{=}"] & \slice{\XX^\II \times \II} \arrow[rr, equals] \arrow[d, equals] & \arrow[d, phantom, "\Uparrow\nu"] & \slice{\XX^\II \times \II} \arrow[d, equals] & \slice{\XX} \arrow[l, "\epsilon^*"]  \arrow[d, equals]\\ \slice{\XX^\II \times \II} & \slice{\XX^\II}\arrow[l, "\varpi^*"] & \arrow[l, "\varpi_*"] \slice{\XX^\II \times \II} & \slice{\XX} \arrow[l, "\epsilon^*"] & \slice{\XX^\II \times \II} & \slice{\XX^\II} \arrow[l, "\varpi^*"] & \slice{\XX^\II \times \II} \arrow[l, "\varpi_*"] & \slice{\XX} \arrow[l, "\epsilon^*"]
  \end{tikzcd}
\]
This follows immediately from Lemma \ref{lem:canonical-counit-iso}.
\end{proof}

It remains to construct the right-hand square of the retract diagram \eqref{eq:retract}. This is where we use the hypothesis that ${p} \colon \AA \to \XX$ is a fibration.

\begin{lem}\label{lem:section-retraction} A section $s$ to $\delta\To{p}$ induces a natural transformation
  \[
    \begin{tikzcd} \slice{\AA^\II \times \II} \arrow[d, "{({p}^\II\times\II)_*}"'] \arrow[dr, phantom, "\Downarrow\tau"] & \arrow[l, "\epsilon^*"']\slice{\AA} \arrow[d, "{p}_*"] \\ \slice{\XX^\II \times \II} & \slice{\XX} \arrow[l, "\epsilon^*"]
    \end{tikzcd}\]
that defines a retraction to $\kappa \colon \epsilon^*{p}_* \To ({p}^\II \times\II)_*\epsilon^*$. In particular, such a natural transformation exists when ${p} \colon \AA \to \XX$ is a fibration.
\end{lem}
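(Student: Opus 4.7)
The plan is to construct $\tau$ by applying the three-fold mates construction that defines $\kappa$ to a second commutative square built from the section $s$, and then to deduce the retraction identity via horizontal pasting. Since $s$ is a morphism in $\slice{\XX^\II \times \II}$, we have $(p^\II\times\II)\circ s = \epsilon^* p$, so the following square commutes:
\[
\begin{tikzcd}
\AA_\epsilon \arrow[r, "s"] \arrow[d, "\epsilon^* p"'] & \AA^\II\times\II \arrow[d, "p^\II\times\II"] \\
\XX^\II\times\II \arrow[r, equals] & \XX^\II\times\II
\end{tikzcd}
\]
Applying to this square the same three-fold mates construction (vertical, then horizontal, then vertical, starting from the identity 2-cell on the corresponding square of left adjoints) used to define $\kappa$ produces a natural transformation $\sigma\colon (p^\II\times\II)_* \To (\epsilon^*p)_* s^*$.

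The key observation is that horizontally pasting this square on the left of the square defining $\kappa$ recovers precisely the pullback square defining $\AA_\epsilon$, using the identity $\epsilon\circ s = (p^*\epsilon)\circ(\delta\To p)\circ s = p^*\epsilon$. By Lemma \ref{lem:beck-chevalley-isos}, the three-mates construction applied to this pullback yields the invertible Beck-Chevalley transformation $\lambda\colon \epsilon^*p_* \cong (\epsilon^*p)_*(p^*\epsilon)^*$. By the double functoriality of the mates correspondence (Theorem \ref{thm:KS}), the construction respects horizontal pasting of squares, so after absorbing the canonical pseudofunctoriality isomorphism $s^*\epsilon^* \cong (p^*\epsilon)^*$, one obtains the pasting equation
\[
\lambda = \sigma_{\epsilon^*}\cdot \kappa,
\]
where $\sigma_{\epsilon^*}$ denotes the whiskering of $\sigma$ by $\epsilon^*\colon\slice{\AA}\to\slice{\AA^\II\times\II}$.

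I would then define $\tau := \lambda^{-1}\cdot\sigma_{\epsilon^*}\colon (p^\II\times\II)_*\epsilon^* \To \epsilon^*p_*$, so that the retraction identity $\tau\cdot\kappa = \lambda^{-1}\cdot\sigma_{\epsilon^*}\cdot\kappa = \lambda^{-1}\cdot\lambda = \id$ is immediate. The last sentence of the lemma is automatic: when $p$ is a fibration, the map $\delta\To p$ is a trivial fibration by Definition \ref{defn:fibration}, and hence admits a section by the standing hypothesis on trivial fibrations. The hard part will be verifying the pasting equation $\lambda = \sigma_{\epsilon^*}\cdot\kappa$, which amounts to tracking the three successive mate operations simultaneously; this is a direct application of the double functoriality asserted in Theorem \ref{thm:KS}, used in exactly the spirit advertised in the introduction.
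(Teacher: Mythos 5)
Your proposal is correct and follows essentially the same route as the paper: both exploit that the pullback square defining $\AA_\epsilon$ factors through the section $s$ (via $(p^\II\times\II)\cdot s=\epsilon^*p$ and $\epsilon\cdot s=p^*\epsilon$), use the double functoriality of mates to express the invertible Beck--Chevalley transformation as the composite of $\kappa$ with an auxiliary $2$-cell built from $s$, and then define $\tau$ by composing that $2$-cell with the inverse of the Beck--Chevalley isomorphism. The only (immaterial) difference is that you obtain the auxiliary $2$-cell by applying the triple-mate construction directly to the square containing $s$, producing a transformation involving $s^*$, whereas the paper first records the factorization at the level of the pullback functors and then takes mates, producing a $\sigma$ involving $s_*$.
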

\begin{proof}
Any section $s$ to $\delta \To{p}$  defines a commutative diagram
\begin{equation}\label{eq:section}
  \begin{tikzcd} \AA^\II \times \II \arrow[drr, bend left, "\epsilon"] \arrow[ddr, bend right, "{p}^\II \times \II "'] \arrow[dr, dashed, "\delta\To{p}"] \\ & \arrow[ul, bend left, dotted, "s"] \AA_{\epsilon} \arrow[d, "\epsilon^*{p}"'] \arrow[r, "{p}^*\epsilon"] \arrow[dr, phantom, "\lrcorner" very near start] & \AA \arrow[d, "{p}"] \\ & \XX^\II\times \II  \arrow[r, "\epsilon"'] & \XX
  \end{tikzcd}    
\end{equation}
On account of the commutative diagram \eqref{eq:section}, the natural isomorphism between pullbacks factors as follows:
\[
\begin{tikzcd} \arrow[dr, phantom, "\cong"] \slice{\AA_{\epsilon}} & \slice{\AA} \arrow[l, "({p}^*\epsilon)^*"']\\ \slice{\XX^\II\times\II} \arrow[u, "{(\epsilon^*{p})^*}"] & \slice{\XX} \arrow[l, "\epsilon^*"] \arrow[u, "{p}^*"']
\end{tikzcd}  \quad = \quad 
\begin{tikzcd} \slice{\AA_\epsilon} \arrow[r, equals] & \slice{\AA_{\epsilon}}\arrow[dr, phantom, "\cong"] & \slice{\AA} \arrow[l, "({p}^*\epsilon)^*"']\\ \arrow[r, phantom, "\cong"] & \slice{\AA^\II\times\II} \arrow[dr, phantom, "\cong"]\arrow[u, "s^*"] &
    \slice{\AA} \arrow[u, equals] \arrow[l, "\epsilon^*"]
    \\ \slice{\XX^\II \times \II} \arrow[uu, "{(\epsilon^*{p})^*}"] \arrow[r, equals] & \slice{\XX^\II\times\II} \arrow[u, "{({p}^\II\times\II)^*}"] & \slice{\XX} \arrow[l, "\epsilon^*"] \arrow[u, "{p}^*"']
\end{tikzcd} 
\]
by Lemma \ref{lem:pseudofunctoriality} and Warning \ref{war:pseudofunctoriality}.
This gives rise to a pasting equation between the mates
\[
\begin{tikzcd}  \slice{\AA_{\epsilon}} \arrow[dr, phantom, "\cong"] \arrow[d, "{(\epsilon^*{p})_*}"'] & \slice{\AA} \arrow[l, "({p}^*\epsilon)^*"'] \arrow[d, "{p}_*"]\\ \slice{\XX^\II\times\II}  & \slice{\XX} \arrow[l, "\epsilon^*"]
\end{tikzcd}  \quad = \quad 
\begin{tikzcd}   \slice{\AA_{\epsilon}} \arrow[r, equals] \arrow[dd, "(\epsilon^*{p})_*"'] &  \slice{\AA_{\epsilon}} \arrow[d, "s_*"']  \arrow[dr,phantom, "\Uparrow\sigma"] & \slice{\AA} \arrow[l, "({p}^*\epsilon)^*"']\\ \arrow[r, phantom, "\cong"] & \slice{\AA^\II\times\II} \arrow[d, "{({p}^\II\times\II)_*}"'] \arrow[dr, phantom, "\Uparrow\kappa"] &
    \slice{\AA} \arrow[u, equals] \arrow[l, "\epsilon^*"] \arrow[d, "{p}_*"]
    \\\slice{\XX^\II\times\II}  \arrow[r, equals] & \slice{\XX^\II\times\II} & \slice{\XX} \arrow[l, "\epsilon^*"] 
\end{tikzcd} 
\]
where the unlabeled natural transformation in the left-hand square is the Beck-Chevalley isomorphism associated to the pullback. Composing $\sigma$ with the pair of unlabeled isomorphisms in the diagram above, we obtain a retract $\tau \colon \Pi_{\AA^\II\times\II}(\BB_{\epsilon}) \to (\Pi_\AA\BB)_{\epsilon}$ to $\kappa \colon (\Pi_\AA\BB)_{\epsilon} \to \Pi_{\AA^\II\times\II}(\BB_{\epsilon})$.
\end{proof}

Consider the pasted composite:
\begin{equation}\label{eq:top-retract}
  \begin{tikzcd} \slice{\AA^\II \times \II} \arrow[d, "{({p}^\II \times \II)_*}"'] \arrow[dr, phantom, "\cong"] & \slice{\AA^\II} \arrow[dr, phantom, "\cong"] \arrow[d, "{p}^\II_*" description] \arrow[l, "\varpi^*"']& \arrow[l, "\varpi_*"'] \slice{\AA^\II \times \II} \arrow[d, "{({p}^\II\times\II)_*}" description] \arrow[dr, phantom, "\Downarrow\tau"] & \arrow[l, "\epsilon^*"']\slice{\AA} \arrow[d, "{p}_*"] \\ \slice{\XX^\II \times \II} & \slice{\XX^\II}\arrow[l, "\varpi^*"] & \arrow[l, "\varpi_*"] \slice{\XX^\II \times \II} & \slice{\XX} \arrow[l, "\epsilon^*"]
  \end{tikzcd}
\end{equation}
whose unlabeled isomorphisms are the inverses of the isomorphisms \eqref{eq:canonical-isos}.
 This defines the final natural transformation $\tau' \colon \Pi_{\AA^\II \times \II}(\BB^\II \times \II) \To (\Pi_\AA\BB)^\II \times \II$, appearing in the diagram \eqref{eq:retract}. The proof of Theorem \ref{thm:coquand} is completed by the following proposition.

\begin{prop} The maps that we have constructed for any map ${q} \colon \BB \to \AA$ and any fibration ${p} \colon \AA \to \XX$ assemble into a retract diagram:
\[\begin{tikzcd}
  (\Pi_\AA\BB)^\II\times\II \arrow[r, "\kappa'"] \arrow[d, "\delta \To {p}_*{q}"'] & \Pi_{\AA^\II\times\II}(\BB^\II\times\II) \arrow[d, "({p}^\II\times\II)_*(\delta\To {q})"]  \arrow[r, "\tau'"] & (\Pi_\AA\BB)^\II\times\II \arrow[d, "\delta \To {p}_*q"]\\ (\Pi_\AA \BB)_{\epsilon} \arrow[r, "\kappa"'] & \Pi_{\AA^\II\times\II} (\BB_{\epsilon}) \arrow[r, "\tau"'] & (\Pi_\AA \BB)_{\epsilon}
\end{tikzcd}
\]
\end{prop}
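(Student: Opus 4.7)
The proposition makes four claims: (i) the left square commutes; (ii) the right square commutes; (iii) the top horizontal composite equals $\id_{(\Pi_\AA\BB)^\II\times\II}$; and (iv) the bottom horizontal composite equals $\id_{(\Pi_\AA\BB)_\epsilon}$. Item (i) is Lemma \ref{lem:section-square}, and item (iv) is built into the construction of $\tau$ as a retraction of $\kappa$ in Lemma \ref{lem:section-retraction}. My plan is therefore to prove (iii) first, by a direct interchange argument, and then (ii), by mirroring the strategy of Lemma \ref{lem:section-square}.

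For the top composite (iii), the key observation is that both $\kappa'$ and $\tau'$ are horizontal pastings of three 2-cells. In \eqref{eq:top-section}, the rightmost square contains $\kappa$ while the left two contain the canonical isomorphisms of \eqref{eq:canonical-isos}, whereas in \eqref{eq:top-retract} the rightmost square contains $\tau$ while the left two contain the inverses of those same canonical isomorphisms. The vertical composite of $\tau'$ after $\kappa'$ decomposes, by the interchange law, as the horizontal pasting of three vertical composites: the left two are compositions of mutually inverse isomorphisms and therefore identities, while the rightmost is $\tau\circ\kappa$, which equals the identity by Lemma \ref{lem:section-retraction}. Hence $\tau'\circ\kappa' = \id$.

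For the right square (ii), I would mirror the proof of Lemma \ref{lem:section-square}. The composite $(\delta\To p_*q)\circ\tau'$ unfolds into a pasting diagram in which the whiskered counit $\nu$ computing $\delta\To p_*q$ sits above the three-square diagram \eqref{eq:top-retract} defining $\tau'$, and $\tau\circ(p^\II\times\II)_*(\delta\To q)$ unfolds into the analogous pasting with $\nu$ below. The equality of the two reduces to the statement of Lemma \ref{lem:canonical-counit-iso} with the canonical isomorphisms of \eqref{eq:canonical-isos} replaced by their inverses, which follows from Lemma \ref{lem:canonical-counit-iso} itself by inverting the 2-cells on both sides. The universal property of the pullback $(\Pi_\AA\BB)_\epsilon$ then delivers the square.

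The main obstacle is the bookkeeping: ensuring that the inversions of the canonical isomorphisms in \eqref{eq:canonical-isos} line up with the pasting orientations in \eqref{eq:top-retract}, and that the interchange in (iii) is applied to precisely the right pair of 2-cells in each column. Once the pastings are written down, both (ii) and (iii) reduce to formal manipulations made automatic by Theorem \ref{thm:KS} and Lemma \ref{lem:canonical-counit-iso}.
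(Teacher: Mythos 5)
Your proposal is correct and follows essentially the same route as the paper: the left square and the bottom composite are delegated to Lemmas \ref{lem:section-square} and \ref{lem:section-retraction}, the top composite is handled by interchange together with the observation that the unlabeled isomorphisms in \eqref{eq:top-section} and \eqref{eq:top-retract} are pairwise inverses, and the right square reduces to Lemma \ref{lem:canonical-counit-iso} exactly as in the proof of Lemma \ref{lem:section-square}. (The appeal to the universal property of the pullback at the end of your step (ii) is unnecessary once the pasting equation of natural transformations is established, but it is harmless.)
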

\begin{proof}
Lemma \ref{lem:section-square} verifies the commutativity of the left-hand square under weaker hypotheses, while Lemma \ref{lem:section-retraction} proves that the bottom composite $\tau \cdot \kappa$ is the identity. To see that $\tau' \colon \Pi_{\AA^\II \times \II}(\BB^\II \times \II) \To (\Pi_\AA\BB)^\II \times \II$ is a retract of $\kappa' \colon (\Pi_\AA\BB)^\II \times \II \to \Pi_{\AA^\II \times \II}(\BB^\II \times \II)$ we show that 
\eqref{eq:top-retract} is a retract of \eqref{eq:top-section}. This follows easily from Lemma \ref{lem:section-retraction} and the observation that the unlabeled isomorphisms \eqref{eq:canonical-isos} in these diagrams are pairwise inverses.

Finally, to prove the commutativity of the right-hand square in the retract diagram, we must show that the pasted composite
\[
  \begin{tikzcd} \slice{\AA^\II \times \II} \arrow[d, "{({p}^\II \times \II)_*}"'] \arrow[dr, phantom, "\cong"] & \slice{\AA^\II} \arrow[dr, phantom, "\cong"] \arrow[d, "{p}^\II_*" description] \arrow[l, "\varpi^*"']& \arrow[l, "\varpi_*"'] \slice{\AA^\II \times \II} \arrow[d, "{({p}^\II\times\II)_*}" description] \arrow[dr, phantom, "\Downarrow\tau"] & \arrow[l, "\epsilon^*"']\slice{\AA} \arrow[d, "{p}_*"] \\ \slice{\XX^\II \times \II} & \slice{\XX^\II}\arrow[l, "\varpi^*"] & \arrow[l, "\varpi_*"] \slice{\XX^\II \times \II} & \slice{\XX} \arrow[l, "\epsilon^*"] \\
    \slice{\XX^\II \times \II} \arrow[u,equals] \arrow[rr, equals] & \arrow[u, phantom, "\Downarrow\nu"] &\arrow[u, equals] \slice{\XX^\II \times \II} & \slice{\XX}  \arrow[l, "\epsilon^*"] \arrow[u, equals]
  \end{tikzcd}
\]
equals the pasted composite
\[
  \begin{tikzcd}
   \slice{\AA^\II \times \II} & \slice{\AA^\II} \arrow[l, "\varpi^*"'] & \slice{\AA^\II \times \II} \arrow[l, "\varpi_*"'] & \slice{\AA \arrow[l, "\epsilon^*"']} \\
    \slice{\AA^\II \times \II} \arrow[d, "({p}^\II \times\II)_*"'] \arrow[u,equals] \arrow[rr, equals] & \arrow[u, phantom, "\Downarrow\nu"] &\arrow[u, equals] \slice{\AA^\II \times \II} \arrow[d, "({p}^\II \times\II)_*"'] \arrow[dr, phantom, "\Downarrow\tau"] & \slice{\AA}  \arrow[l, "\epsilon^*"] \arrow[u, equals] \arrow[d, "{p}_*"] \\
    \slice{\XX^\II \times \II} \arrow[rr, equals] & & \slice{\XX^\II \times \II} & \slice{\XX} \arrow[l, "\epsilon^*"]
  \end{tikzcd}
  \]
  but, as in the proof of Lemma \ref{lem:section-square}, this follows immediately from Lemma \ref{lem:canonical-counit-iso}.
\end{proof}

\appendix

\section{Evaluation at the generic point}\label{sec:generic}

In this section, we explain the connection between the notion of fibration introduced in Definition \ref{defn:fibration} and evaluation at the generic point in the slice over $\II$. We first explain the meaning of the phrase ``evaluate at the generic point in the slice over $\II$.'' We continue working with the notation for adjoint triples introduced in \eqref{eq:adjoint-triple} and \eqref{eq:adjoint-triple-unit-counit-pairs}

\begin{rmk}\label{rmk:generic-point}
  The diagonal $\delta$ defines a map over $\II$ displayed below-left
    \[
    \begin{tikzcd} \II \arrow[dr, equals] \arrow[rr, "\delta"] & & \II 
    \times \II \arrow[dl, "\pi"] \\ & \II
    \end{tikzcd} \qquad \rightsquigarrow \qquad 
    \begin{tikzcd} \slice{\II} \arrow[r, bend left, equals] \arrow[r, bend 
    right, "{(-)^\II}"'] \arrow[r, phantom, "\Uparrow\delta"] & \slice{\II}
    \end{tikzcd}
    \]
    that  is thought of as the ``generic point'' of $\II$ when considered as 
    an object in $\slice{\II}$: in the slice over $\II$, the domain of 
    $\delta$ is the terminal object, while the codomain of $\delta$ is the 
    object ``$\II$,'' pulled back to this slice. The natural transformation that defines ``evaluation at the generic point in the slice over $\II$''
  is the corresponding restriction map displayed above-right
  from exponentiation with $\II$, considered as an object over $\II$, to exponentiation with the terminal 
  object in the slice over $\II$, the latter being naturally isomorphic to the 
  identity functor. By Remark \ref{rmk:lccc}, this natural transformation is defined by the pasting diagram:
  \[
     \begin{tikzcd}    & & \slice{\II} \arrow[r, equals] & \slice{\II} \\
      & \slice{\II\times\II} \arrow[r, equals] \arrow[dr, phantom, 
  "\Searrow\eta"] & \slice{\II\times\II} \arrow[u, "\pi_*"] \arrow[r, 
  phantom, "\cong"] & ~  \\     \slice{\II} \arrow[r, "\pi^*"] & 
  \slice{\II\times\II} \arrow[u, equals] \arrow[r, "\delta^*"'] & 
  \slice{\II} \arrow[u, "\delta_*"'] \arrow[dr, equals] \\  \slice{\II} 
  \arrow[u, equals] \arrow[rrr, equals] & \arrow[u, 
  phantom, "\cong"] & & \slice{\II} \arrow[uuu, equals]
     \end{tikzcd}
  \]
  Its restriction along $!^* \colon \slice{\ast} \to \slice{\II}$ specializes this ``evaluation at the generic point in the slice over $\II$'' natural transformation to objects that are pulled back to live in the slice over $\II$. We refer to this restricted natural transformation as ``evaluation at the generic point of $\II$.'' Upon pasting with the natural isomorphisms defined by taking iterated mates of the identity transformation associated with the pullback square below-left
  \begin{equation}\label{eq:ev-generic-point-in-slice}
    \begin{tikzcd} \II \times \II \arrow[d, "\pi"'] \arrow[r, "\mu"] \arrow[dr, phantom, "\lrcorner" very near start] & \II \arrow[d, "!"] \\ \II \arrow[r, "\!"'] & \ast
    \end{tikzcd} \qquad \qquad \epsilon_\II^\delta \coloneq 
      \begin{tikzcd} \slice{\ast} \arrow[r, "!^*"] & \slice{\II} \arrow[r, equals]     & \slice{\II} \arrow[r, equals] & \slice{\II} \\ \slice{\II} \arrow[r, "\mu^*"'] \arrow[u, "!_*"] &
        \slice{\II\times\II} \arrow[u, "\pi_*"'] \arrow[ul, phantom, "\cong"] \arrow[r, equals] \arrow[dr, phantom, 
   "\Searrow\eta"] & \slice{\II\times\II} \arrow[u, "\pi_*"] \arrow[r, 
   phantom, "\cong"] & ~  \\ \slice{\II} \arrow[r, "\mu^*"] \arrow[dr, phantom, "\cong"] \arrow[u, equals] &      
   \slice{\II\times\II} \arrow[u, equals] \arrow[r, "\delta^*"'] & 
   \slice{\II} \arrow[u, "\delta_*"'] \arrow[r, equals] & \slice{\II} \arrow[uu, equals]\\ \slice{\ast} \arrow[r, "!^*"'] \arrow[u, "!^*"] & \slice{\II} \arrow[u, "\pi^*"']  \arrow[r, equals]  \arrow[ur, 
   phantom, "\cong"] & \slice{\II} \arrow[u, equals] \arrow[r, equals] & \slice{\II} \arrow[u, equals]  
      \end{tikzcd}
    \end{equation}
  we may regard the ``evaluation at the generic point of $\II$'' natural transformation as a map whose component at $\XX \in \slice{\ast}$ has the form $\epsilon_\II \colon \XX^\II \times \II \to \XX \times \II$. 
    \end{rmk}
  
    The components of the natural transformation \eqref{eq:ev-generic-point-in-slice} lie in the slice over $\II$. The natural transformation \eqref{eq:ev-generic-point-in-slice} may be transposed across the adjunction $!_! \dashv\, !^*$ by post-whiskering with $!_! \colon \slice{\II} \to \slice{\ast}$ and pasting with the counit $\mu$ to define a natural transformation: 
  \begin{equation}\label{eq:generic-point}
     \begin{tikzcd} \slice{\ast} \arrow[r, "!^*"] & \slice{\II} \arrow[r, equals]     & \slice{\II} \arrow[r, equals] & \slice{\II} \arrow[r, "!_!"] & \slice{\ast}\\ \slice{\II} \arrow[r, "\mu^*"'] \arrow[u, "!_*"] &
       \slice{\II\times\II} \arrow[u, "\pi_*"'] \arrow[ul, phantom, "\cong"] \arrow[r, equals] \arrow[dr, phantom, 
  "\Searrow\eta"] & \slice{\II\times\II} \arrow[u, "\pi_*"] \arrow[r, 
  phantom, "\cong"] & ~  \\ \slice{\II} \arrow[r, "\mu^*"] \arrow[dr, phantom, "\cong"] \arrow[u, equals] &      
  \slice{\II\times\II} \arrow[u, equals] \arrow[r, "\delta^*"'] & 
  \slice{\II} \arrow[u, "\delta_*"'] \arrow[r, equals] & \slice{\II} \arrow[uu, equals]\\ \slice{\ast} \arrow[r, "!^*"'] \arrow[u, "!^*"] & \slice{\II} \arrow[u, "\pi^*"']  \arrow[r, equals]  \arrow[ur, 
  phantom, "\cong"] & \slice{\II} \arrow[u, equals] \arrow[r, equals] & \slice{\II} \arrow[u, equals] \arrow[r, "!_!"] \arrow[dr, phantom, "\Searrow\mu"] & \slice{\ast} \arrow[uuu, equals] \arrow[d, equals] \\ \slice{\ast}\arrow[r, equals] \arrow[u, equals] & \slice{\ast}\arrow[u, "!^*"] \arrow[rr, equals] &  &  \slice{\ast}\arrow[u, "!^*"] \arrow[r, equals]  & \slice{\ast}
     \end{tikzcd}
    \end{equation} The effect of the adjoint transposition from \eqref{eq:ev-generic-point-in-slice} to \eqref{eq:generic-point} on the component at $\XX \in \slice{\ast}$ is to post-compose with the projection $\varpi \colon \XX \times \II \to \XX$ and forget that this map lies over $\II$.

  Note that the boundary of the natural transformation \eqref{eq:generic-point} agrees with the boundary of the natural transformation $\epsilon$ defined in \eqref{eq:varpi-epsilon}, and we will show that these natural transformations agree.

    \begin{lem}\label{lem:generic-point} The counit $\epsilon$ of the adjunction $!_!!^* \dashv \, !_*!^*$ is equal to the composite of the ``evaluation at the generic point'' natural transformation followed by the projection away from $\II$. That is, for any $\XX \in \slice{\ast}$, 
      \[\begin{tikzcd} \XX^\II \times \II \arrow[dr, "\epsilon"'] \arrow[r, "\epsilon_\II^\delta"] & \XX \times \II \arrow[d, "\varpi"] \\ & \XX\end{tikzcd}\]
    \end{lem}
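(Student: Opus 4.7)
Both natural transformations have the same boundary functors $!_!!^* \To \id_{\slice{\ast}}$, so the task is to verify that the two pasting presentations define the same 2-cell. The plan is to manipulate \eqref{eq:generic-point} using Theorem \ref{thm:KS} together with the pseudofunctoriality packaging of Lemma \ref{lem:pseudofunctoriality} and Remark \ref{rmk:pseudofunctoriality} until it matches \eqref{eq:varpi-epsilon}.

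The first step is to identify the bottom-right portion of \eqref{eq:generic-point}---the square containing $\mu \colon !_!!^* \To \id$ whiskered by $!^*$ on the left---as literally implementing postcomposition with the projection $\varpi$, since the component of $\mu$ at $\XX$ is $\varpi \colon \XX \times \II \to \XX$. Factoring this piece off reduces the equality to showing that the remaining pasting of \eqref{eq:generic-point}, which is precisely the definition of $\epsilon_\II^\delta$ from Remark \ref{rmk:generic-point}, agrees after appropriate whiskering with the $\nu$-pasting portion of \eqref{eq:varpi-epsilon}.

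The central step is then to collapse the Beck-Chevalley and unit data packaged into $\epsilon_\II^\delta$. The pullback square of Remark \ref{rmk:generic-point} relating $\II \times \II$ and $\ast$ supplies the canonical isomorphisms (Lemma \ref{lem:beck-chevalley-isos}) that convert between $\pi_* \rho^*$ and $!^* !_*$, while the unit $\eta\colon \id \To \delta_*\delta^*$ appears whiskered against the diagonal. Here the essential geometric fact is that $\delta$ is a section of $\pi$, so $\pi\delta = \id_\II$, which by strict functoriality (Remark \ref{rmk:pseudofunctoriality}) makes $\delta^*\pi^* = \id$ and, after taking mates across the vertical adjunctions of Theorem \ref{thm:KS}, identifies the $\eta$-pasting with the counit $\nu$ of $!^* \dashv !_*$ appearing in \eqref{eq:varpi-epsilon}.

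The main obstacle is the bookkeeping: orchestrating the many canonical isomorphisms supplied by Lemma \ref{lem:pseudofunctoriality} without losing track of how they compose or inadvertently invoking the non-identity isomorphisms warned about in Warning \ref{war:pseudofunctoriality}. The cleanest way to handle this is to transfer the whole pasting equation into $\Ladj$ via Theorem \ref{thm:KS}, where Remark \ref{rmk:pseudofunctoriality} ensures that most of the coherence isomorphisms become identities, verify the resulting equality between pastings of $\mu$'s and identities, and then transport the conclusion back across the mates correspondence.
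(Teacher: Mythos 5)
Your strategy is essentially the paper's: both proofs reduce the pasting equation to a triviality by repeatedly applying the Kelly--Street mates correspondence (Theorem \ref{thm:KS}) to transfer the diagram to the left-adjoint side, where the strict functoriality of $(-)_!$ turns the coherence isomorphisms and unit/counit cells into identities. One small correction: strict functoriality gives $\pi_!\delta_! = \id$, not $\delta^*\pi^* = \id$ --- the latter is only a canonical isomorphism (Warning \ref{war:pseudofunctoriality}) --- but since your final step works entirely in $\Ladj$ and transports back across mates, this slip does not affect the argument.
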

    \begin{proof}
      We must show that the pasted composite \eqref{eq:generic-point} agrees with the counit $\epsilon$. Taking mates in the vertical direction of the latter gives the identity, while for the former this yields 
      \[
        \begin{tikzcd} \slice{\ast} \arrow[r, "!^*"] & \slice{\II} \arrow[r, equals]     & \slice{\II} \arrow[r, equals] & \slice{\II} \arrow[r, "!_!"] & \slice{\ast}\\ \slice{\II} \arrow[r, "\mu^*"'] \arrow[from=u, "!^*"'] &
          \slice{\II\times\II} \arrow[from=u, "\pi^*"] \arrow[ul, phantom, "\cong"] \arrow[r, equals] & \slice{\II\times\II} \arrow[from=u, "\pi^*"'] \arrow[r, 
     phantom, "\cong"] & ~  \\ \slice{\II} \arrow[r, "\mu^*"] \arrow[dr, phantom, "\cong"] \arrow[u, equals] &      
     \slice{\II\times\II} \arrow[u, equals] \arrow[r, "\delta^*"'] & 
     \slice{\II} \arrow[from=u, "\delta^*"] \arrow[r, equals] & \slice{\II} \arrow[uu, equals]\\ \slice{\ast} \arrow[dr, phantom, "\Swarrow\mu"] \arrow[r, "!^*"'] \arrow[from=u, "!_!"'] & \slice{\II} \arrow[from=u, "\pi_!"]  \arrow[r, equals]  \arrow[ur, 
     phantom, "\cong"] & \slice{\II} \arrow[u, equals] \arrow[r, equals] & \slice{\II} \arrow[u, equals] \arrow[r, "!_!"]  & \slice{\ast} \arrow[uuu, equals] \arrow[d, equals] \\ \slice{\ast}\arrow[r, equals] \arrow[u, equals] & \slice{\ast}\arrow[from=u, "!_!"'] \arrow[rr, equals] &  &  \slice{\ast}\arrow[from=u, "!_!"'] \arrow[r, equals]  & \slice{\ast}
        \end{tikzcd}\]
Thus, we must show that this pasted natural transformation is the identity. As this natural transformation is the transpose along $!_! \dashv\, !^*$ of the natural transformation 
 \[
  \begin{tikzcd} \slice{\ast} \arrow[r, "!^*"] & \slice{\II} \arrow[r, equals]     & \slice{\II} \arrow[r, equals] & \slice{\II} \\ \slice{\II} \arrow[r, "\mu^*"'] \arrow[from=u, "!^*"'] &
    \slice{\II\times\II} \arrow[from=u, "\pi^*"] \arrow[ul, phantom, "\cong"] \arrow[r, equals] & \slice{\II\times\II} \arrow[from=u, "\pi^*"'] \arrow[r, 
phantom, "\cong"] & ~  \\ \slice{\II} \arrow[r, "\mu^*"] \arrow[dr, phantom, "\cong"] \arrow[u, equals] &      
\slice{\II\times\II} \arrow[u, equals] \arrow[r, "\delta^*"'] & 
\slice{\II} \arrow[from=u, "\delta^*"] \arrow[r, equals] & \slice{\II} \arrow[uu, equals]\\ \slice{\ast}  \arrow[r, "!^*"'] \arrow[from=u, "!_!"'] & \slice{\II} \arrow[from=u, "\pi_!"]  \arrow[r, equals]  \arrow[ur, 
phantom, "\cong"] & \slice{\II} \arrow[u, equals] \arrow[r, equals] & \slice{\II} \arrow[u, equals]  
  \end{tikzcd} \quad = \quad
  \begin{tikzcd} \slice{\ast} \arrow[r, "!^*"] \arrow[d, "!^*"'] \arrow[dr, phantom, "\cong"] & \slice{\II} \arrow[d, "\pi^*"]\arrow[r, equals]  \arrow[dr, phantom, "\cong"]   & \slice{\II} & \slice{\ast} \arrow[d, "!^*"'] \arrow[r, "!^*"] & \slice{\II} \arrow[d, equals] \\ \slice{\II} \arrow[r, "\mu^*"] \arrow[dr, phantom, "\cong"] &      
\slice{\II\times\II}  \arrow[r, "\delta^*"'] & 
\slice{\II} \arrow[from=u, equals]\arrow[r, phantom, "\overset{?}{=}"] & \slice{\II} \arrow[d, "!_!"'] \arrow[r, equals] & \slice{\II} \arrow[d, equals] \arrow[dl, phantom, "\Swarrow\iota"] \\ \slice{\ast}  \arrow[r, "!^*"'] \arrow[from=u, "!_!"'] & \slice{\II} \arrow[from=u, "\pi_!"]  \arrow[r, equals]  \arrow[ur, 
phantom, "\cong"] & \slice{\II} \arrow[u, equals] & \slice{\ast} \arrow[r, "!^*"'] & \slice{\II}
  \end{tikzcd}  \]
  it suffices to show that this equals the whiskered unit $\iota !^*$, i.e., that the pasting equation displayed above right holds. Taking mates in the horizontal direction yields a pasting equation between natural transformations that are each post-whiskered with $!_!$.
\[
  \begin{tikzcd} \slice{\ast} \arrow[from=r, "!_!"'] \arrow[d, "!^*"'] \arrow[dr, phantom, "\cong"] & \slice{\II} \arrow[d, "\pi^*"]\arrow[r, equals]  \arrow[dr, phantom, "\Nwarrow"]   & \slice{\II} & \slice{\ast} \arrow[d, "!^*"'] \arrow[from=r, "!_!"'] & \slice{\II} \arrow[d, equals] \\ \slice{\II} \arrow[from=r, "\mu_!"'] &      
    \slice{\II\times\II}  \arrow[from=r, "\delta_!"] & 
    \slice{\II} \arrow[from=u, equals]\arrow[r, phantom, "\overset{?}{=}"] & \slice{\II} \arrow[d, "!_!"'] \arrow[r, equals] & \slice{\II} \arrow[d, equals] \arrow[ul, phantom, "\Nwarrow\iota"] \\ \slice{\ast}  \arrow[from=r, "!_!"] \arrow[from=u, "!_!"'] & \slice{\II} \arrow[from=u, "\pi_!"]  \arrow[r, equals]  & \slice{\II} \arrow[u, equals] & \slice{\ast} \arrow[from=r, "!_!"] & \slice{\II}
      \end{tikzcd} 
  \]
  This pasting equation holds because the top rows of these natural transformations, where the non-identity cells live, are equal, as can be seen by taking mates once more in the vertical direction.
  \end{proof}

  \begin{prop}
    \label{prop:fibration}
  For any ${p} \colon \AA \to \XX$, the map $\delta\To{p}$ is 
  isomorphic to the Leibniz exponential in the slice over $\II$ of the map 
  ${p} \times \II$ with the diagonal $\delta \colon \II \to \II \times 
  \II$.
  \end{prop}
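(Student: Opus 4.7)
The plan is to unfold the definition of Leibniz exponential in $\slice{\II}$ and match it, via the adjoint triple $!_! \dashv !^* \dashv !_*$ associated with $! \colon \II \to \ast$, to the pullback square defining $\delta\To p$ in \eqref{eq:fibration}. In $\slice{\II}$ the map $p \times \II$ is the pullback $!^* p$, the domain of $\delta$ is the terminal object, and its codomain is $!^*\II$, so the Leibniz exponential of $p \times \II$ with $\delta$ is the induced map from the exponential of $!^* p$ by $!^*\II$ (formed in $\slice{\II}$) into the pullback of $!^* p$ along the restriction map classifying $\delta$.

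First, using Lemma \ref{lem:pseudofunctoriality} and the Beck-Chevalley isomorphisms of Lemma \ref{lem:beck-chevalley-isos}, I would identify the slice exponential $(!^*p)^{!^*\II}_\II$ with $p^\II \times \II \to \XX^\II \times \II$ equipped with its canonical structure map to $\II$, and the restriction along $\delta$ with the ``evaluation at the generic point'' transformation $\epsilon_\II^\delta$ from Remark \ref{rmk:generic-point}. These identifications are pieced together from Beck-Chevalley isomorphisms, and the mates framework of Section \ref{sec:mates} keeps the bookkeeping tractable.

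Finally, by Lemma \ref{lem:generic-point} the counit $\epsilon$ factors as $\varpi \circ \epsilon_\II^\delta$, so the pullback $\AA_\epsilon$ in \eqref{eq:fibration} may be computed in two stages: first pull $p$ back along $\varpi$ to obtain $p \times \II \colon \AA \times \II \to \XX \times \II$, then pull this back along $\epsilon_\II^\delta$. This two-stage pullback coincides, up to canonical isomorphism, with the pullback appearing in the Leibniz-exponential construction of the previous paragraph. The induced comparison maps then agree by the universal property of pullbacks, establishing the claim. The principal obstacle is keeping the coherence isomorphisms relating exponentials in $\slice{\II}$ to their ambient-category expressions straight, but this is precisely the kind of computation the mates calculus of Section \ref{sec:mates} is designed to facilitate.
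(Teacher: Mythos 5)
Your proposal is correct and follows essentially the same route as the paper: factor the evaluation counit as $\epsilon = \varpi \circ \epsilon_\II^\delta$ via Lemma \ref{lem:generic-point}, compute the pullback $\AA_\epsilon$ in two stages (first along $\varpi$ to get $p \times \II$, then along $\epsilon_\II^\delta$), and recognize the resulting square as living in $\slice{\II}$ with horizontal maps given by evaluation at the generic point as in Remark \ref{rmk:generic-point}. The paper's proof is exactly this argument.
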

  \begin{proof}
    By applying the functor $!^* \colon \slice{\ast} \to \slice{\II}$, the 
    map ${p}$ can be pulled back to define a map ${p} \times \II 
    \colon \AA \times \II \to \XX \times \II$ in the slice over $\II$. By Lemma \ref{lem:generic-point},  the pullback \eqref{eq:fibration} factors as below-left
  \[
     \begin{tikzcd} \AA^\II \times \II \arrow[drrr=20, bend left, "\epsilon"] 
  \arrow[drr=5, bend left, "\epsilon_\II^\delta" pos=.7] \arrow[ddr, bend right, 
  "{p}^\II \times \II "'] \arrow[dr, dashed, "\delta\To{p}"]  & & & & & \AA^\II \times \II 
  \arrow[drr=5, bend left, "\epsilon_\II^\delta"] \arrow[ddr, bend right, 
  "{p}^\II \times \II "'] \arrow[dr, dashed, "\delta\To{p}"]  \\ &  
  \AA_{\epsilon} \arrow[d, "\epsilon^*{p}"'] \arrow[r, 
  "({p}\times\II)^*\epsilon_\II^\delta"] \arrow[dr, phantom, 
  "\lrcorner" very near start] & \AA \times \II \arrow[d, 
  "{p}\times\II"'] \arrow[r, "\varpi"] \arrow[dr, phantom, "\lrcorner" 
  very near start]& \AA \arrow[d, "{p}"] &  \rightsquigarrow & &  
  \AA_{\epsilon} \arrow[d, "\epsilon^*{p}"'] \arrow[r, 
  "({p}\times\II)^*\epsilon_\II^\delta"] \arrow[dr, phantom, 
  "\lrcorner" very near start] & \AA \times \II \arrow[d, 
  "{p}\times\II"']  \\ & \XX^\II\times \II  
  \arrow[r, "\epsilon_\II^\delta"] \arrow[rr, bend right=15, "\epsilon"'] & \XX 
  \times \II \arrow[r, "\varpi"] & \XX & & & \XX^\II\times \II  
  \arrow[r, "\epsilon_\II^\delta"'] & \XX 
  \times \II 
     \end{tikzcd}
     \]
     and thus it suffices to consider the pullback diagram above-right which lives in $\slice{\II}$. By Remark \ref{rmk:generic-point}, the top and bottom horizontal components are defined by evaluation with the generic point in the slice over $\II$ for the objects $\AA\times\II$ and $\XX \times \II$ respectively. Thus this diagram defines the Leibniz exponential with the generic point in the slice over $\II$ as claimed.
  \end{proof}
  
\section{Type-theoretic interpretation of the proof}\label{sec:type-theory}

In this section, we re-express the proof given above in type theory. For a clear exposition of the type theory for a locally cartesian closed category see \cite{Newstead}. We first redescribe the retract diagram \eqref{eq:retract}. Suppose a map $p \maps \AA \to \XX$ is classified by a type $\alpha \maps \XX \to \UU$, and a map $q \maps \BB \to \AA$ is classified by a type $\beta \maps \XX. \alpha \to \UU$. The Leibniz exponential $\delta \To p \maps \AA^\II \times \II \to \AA_\epsilon$ then has the following type:
\begin{equation*}
\tminctx{x : \II \to \XX \, , i:\II}{(\delta \To p)}{\left(\dprd{j:\II} \alpha(x(j))\right)\to \alpha(x(i))} 
\end{equation*}
and given a term $\overline{a} : \dprd{j:\II} \alpha(x(j))$ in the same context, we have $(\delta \To p) \, (\overline{a}) \coloneq \overline{a}(i)$. Thus, the map $\delta\To p_*q \colon (\Pi_\AA\BB)^\II\times\II \to (\Pi_\AA\BB)_\epsilon$ has the type
\begin{equation*}
  \tminctx{x : \II \to \XX \, , i:\II}{(\delta \To p_*q)}{\left(\dprd{j : \II}\dprd{\overline{a}: \alpha(x(j))} \beta(x(j),\overline{a}) \right)\to \left(\dprd{a: \alpha(x(i))} \beta(x(i),a)\right)} 
\end{equation*}

The map $\kappa \maps (\Pi_\AA \BB)_{\epsilon} \to  \Pi_{\AA^\II\times\II} (\BB_{\epsilon}) $ 
is given by the following term
\begin{equation*}
\tminctx{x : \II \to \XX \, , i: \II}{\kappa}{\left(\dprd{a: \alpha(x(i))} \beta(x(i),a)\right) \to \left( \dprd{\overline{\overline{a}} \oftype \dprd{k : \II} \alpha(x(k))} \beta(x(i), \overline{\overline{a}}(i))\right)}
\, ,
\end{equation*}
and given a term $w : \dprd{a: \alpha(x(i))} \beta({x(i),a})$ in the same context, we have
\[
\kappa (w) \coloneq \lam{\overline{\overline{a}}} w( \overline{\overline{a}}(i) ) \, .
\]
The map $\kappa' \colon (\Pi_\AA\BB)^\II \times \II \to \Pi_{\AA^\II \times \II} (\BB^\II \times \II)$, constructed from $\kappa$, corresponds to a term 
\begin{equation*}
\tminctx{x : \II \to \XX \, , i: \II}{\kappa'}{\left(\dprd{j : \II}\dprd{\overline{a}: \alpha(x(j))} \beta(x(j),\overline{a}) \right) \to \left(\dprd{\overline{\overline{a}} \oftype \dprd{k : \II} \alpha(x(k))} \dprd{j: \II} \beta(x(j), \overline{\overline{a}}(j))\right)}
\, ,
\end{equation*}
and given a term $v : \dprd{j : \II}\dprd{\overline{a}: \alpha(x(j))} \beta(x(j),\overline{a})$ in the same context, we have 
\[ 
\kappa' (v) \coloneq\lam{\overline{\overline{a}}}\lam{j} v ( j, \overline{\overline{a}}(j)) \, .
\]
The composite pasting of the two isomorphisms in the diagram \eqref{eq:top-section}, evaluated at the component $\epsilon^*(q)$, corresponds to the canonical isomorphism of types 
\begin{equation}
    \label{eq:exchanging_Pis}
\dprd{j: \II}\dprd{\overline{\overline{a}} \oftype \dprd{j : \II} \alpha(x(j))} \beta(x(j), \overline{\overline{a}}(j)) \, \cong \, \dprd{\overline{\overline{a}} \oftype \dprd{j : \II} \alpha(x(j))} \dprd{j: \II} \beta(x(j), \overline{\overline{a}}(j)) 
\end{equation}
induced by changing the order of the $\Pi$-types. Furthermore, we can see that the diagram in Lemma \ref{lem:section-square} commutes since the left-bottom composite is $v \mapsto v(i) \mapsto \lam{\overline{\overline{a}}} v(i, \overline{\overline{a}}(i))$ and the top-right one is 
$v \mapsto \lam{\overline{\overline{a}}}\lam{j} v( j, \overline{\overline{a}}(j)) \mapsto \lam{\overline{\overline{a}}} v(i, \overline{\overline{a}}(i))$  and by function extensionality, we conclude that the two compositions are classified by the same term. 
 
Now we examine the types of $\tau$ and $\tau'$. A section $s$ to the map $\delta \To p$ corresponds to a term 
\begin{equation}\label{eq:section-type}
\tminctx{x : \II \to \XX \, , i: \II}{s}{\alpha(x(i)) \to \dprd{j : \II}\alpha(x(j))}
\end{equation}
such that for every term $a : \alpha(x(i))$ in the same context, $s(a) (i) = a$. The retract $\tau \colon \Pi_{\AA^\II\times\II} (\BB_{\epsilon}) \to (\Pi_\AA \BB)_{\epsilon}$ of $\kappa$ corresponds to the term 
\[ 
\tminctx{x : \II \to \XX \, , i: \II}{\lam{g}\lam{a}g(s(a))}{\left(\dprd{\overline{\overline{a}} \oftype \dprd{k : \II} \alpha(x(k))} \beta(x(i), \overline{\overline{a}}(i))\right) \to \left(\dprd{a: \alpha(x(i))} \beta(x(i),a)\right)
}
\, ,\]
Note that the type of $g(s(a))$ is $\beta(x(i), \overline{\overline{a}}(i))[s(a)/\overline{\overline{a}}] = \beta(x(i),a)$ since $s(a)(i)=a$. To see that this map is indeed a retraction of $\kappa$, observe that for any term $w: \dprd{a: \alpha(x(i))} \beta(x(i),a)$, we have 
\begin{align*}
\lam{g}\lam{a}g(s(a)) \, \circ \, \lam{w}\lam{\overline{\overline{a}}}w(\overline{\overline{a}}(i)) \, (w) & =
(\lam{g}\lam{a}g(s(a))) (\lam{\overline{\overline{a}}} w(\overline{\overline{a}}(i))) \\
&= \lam{a} (\lam{\overline{\overline{a}}}w(\overline{\overline{a}}(i))\, s(a)) \\ 
&= \lam{a} w (s(a)\, (i))  \\
& = \lam{a} w(a) 
\end{align*}
where the first three equalities are the usual reduction by function application and the last equality is by substitution along $s(a) i = a$. Therefore, by function extensionality, we have 
\[
\lam{g}\lam{a}g(s(a)) \, \circ \, \lam{w}\lam{\overline{\overline{a}}}w(\overline{\overline{a}}(i))= \id
\] 

The retract $\tau' \colon \Pi_{\AA^\II \times \II}(\BB^\II \times \II) \to (\Pi_\AA\BB)^\II \times \II$ of $\kappa'$ is given by the following term 
\[
\tminctx{x : \II \to \XX \, , i: \II}{\lam{f}\lam{j}\lam{\overline{a}} f ( s(\overline{a}),  j) }{\left(\dprd{\overline{\overline{a}} \oftype \dprd{k : \II} \alpha(x(k))} \dprd{j:\II} \beta(x(j), \overline{\overline{a}}(j))\right) \to \left(\dprd{j:\II}\dprd{\overline{a}: \alpha(x(j))} \beta(x(j),\overline{a})\right)
}
\]
Here is where we make use of the generic point $i : I$ appearing in the context. We apply the section $s$ when $i$ is replaced by $j$ to a term $\overline{a} : \alpha(x(j))$ to produce a term $s(\overline{a}) : \Pi_{k : \II} \alpha(x(k))$.

That $\tau'$ is a retract of $\kappa'$ follows from the fact that $\tau$ is a retract of $\kappa$ and the invertible 2-cells pasted to the left of $\kappa$ and $\tau$ are pairwise inverses induced by changing the order of $\Pi$-types in \eqref{eq:exchanging_Pis}. More explicitly, for every $v: \dprd{j : \II}\dprd{\overline{a}: \alpha(x(j))} \beta(x(j),\overline{a})$, 
\[ 
\tau' \circ \kappa' \, (v) = 
\tau' (\lam{\overline{\overline{a}}}\lam{j} v(j, \overline{\overline{a}}(j))) = 
\lam{j}\lam{\overline{a}} (\lam{\overline{\overline{a}}}\lam{j} v( j, \overline{\overline{a}}(j))( s(\overline{a}),  j) =
\lam{j} \lam{\overline{a}} v ( j , s(\overline{a})(j)) = 
\lam{j}\lam{v} v (j, \overline{a})
\]
where the last equality holds because $s(\overline{a})(j) = \overline{a}$. Function extensionality implies that $\tau' \circ \kappa' = \id$.  This concludes the description of \eqref{eq:retract}.

Finally, we see that the type theoretic version of our proof matches Coquand's type-theoretic proof, which uses \eqref{eq:retract} to construct a section to the map $\partial\To p_*q$ using a section $t$ to $\partial\To q$.

A section $t$ to the map $\delta \To q$ corresponds to a term 
\[ 
\tminctx{x : \II \to \XX \, ,  i: \II \, ,\overline{\overline{a}} : \dprd{k : \II} \alpha(x(k))}{t}{\beta(x(i), \overline{\overline{a}}(i) ) \to \dprd{j : \II} \beta(x(j), \overline{\overline{a}}(j) )}
\]
such that for every term $b : \beta(x(i),\overline{\overline{a}}(i))$ in the same context, $t(b) (i) = b$. The section $(p^\II \times \II)_*  (t)$ to $(p^\II \times \II)_* (\delta \To q)$ is given by the term 
\[ 
\tminctx{x : \II \to \XX \, , i: \II}{\lam{g}\lam{\overline{\overline{a}}}t(g(\overline{\overline{a}}))}{\left(\dprd{\overline{\overline{a}} \oftype \dprd{k : \II} \alpha(x(k))} \beta(x(i), \overline{\overline{a}}(i))\right) \to \left(\dprd{\overline{\overline{a}} \oftype \dprd{k : \II} \alpha(x(k))} \dprd{j:\II}\beta(x(j),\overline{\overline{a}}(j))\right)
}
\, 
\]

The retract diagram \eqref{eq:retract} constructs a section to $\delta\To p_*q$ given by the composite map $\tau' \circ (p^\II \times \II)_* (t) \circ \kappa$. This map,  when applied to the term $w : \dprd{a : \alpha(x(i))} \beta(x(i),a)$ in the same context, is calculated as follows: 
\begin{align*}
   (\lam{f}\lam{j}\lam{\overline{a}} f(s(\overline{a}),  j) )\, (\lam{g}\lam{\overline{\overline{a}}}t(g(\overline{\overline{a}})))\, (\lam{\overline{\overline{a}}} w( \overline{\overline{a}}(i) )) & =  (\lam{f}\lam{j}\lam{\overline{a}} f(s(\overline{a}),  j) )\,( \lam{\overline{\overline{a}}}t (\lam{\overline{\overline{a}}} w (\overline{\overline{a}}(i)) (\overline{\overline{a}})))  \\
                & =  (\lam{f}\lam{j}\lam{\overline{a}} f ( s(\overline{a}),  j) )\, (\lam{\overline{\overline{a}}} t (w(\overline{\overline{a}}(i))))    \\
                & = \lam{j}\lam{\overline{a}}(\lam{\overline{\overline{a}}} t (w(\overline{\overline{a}}(i))))( s (\overline{a}) ,j))  \\ 
                & = \lam{j}\lam{\overline{a}} t (w(s(\overline{a})(i)))(j)  
\end{align*}
Notice that although $s (\overline{a}) (j) = \overline{a}$, it is not necessary that $s (\overline{a}) \, i = a$ holds. Therefore, the last term above cannot be simplified further. 

\section{Frobenius for structured fibrations}\label{sec:structured}

In this section, we consider the constructive content of our proof that the fibrations of Definition \ref{defn:fibration} are stable under pushforward. We show that under the natural functorial enhancements of the hypotheses on the class of trivial fibrations stated in Theorem \ref{thm:coquand} that we may define a \emph{functorial} Frobenius operator in the sense of Gambino--Sattler \cite{GS}, demonstrating that our proof that the fibrations are closed under pushforward defines a map of structured fibrations. In parallel, we prove, subject to further natural hypotheses on the structured trivial fibrations, that the fibration structures we define on pushforwards of structured fibrations are stable under substitution in a sense we now describe.

``Stability under substitution'' refers to an important property of classes of fibrations: namely, their stability under pullbacks along arbitrary maps. For instance, if our hypothesized class of trivial fibrations is stable under pullback, then the class of fibrations defined by \eqref{eq:fibration} is again stable under pullback since a pullback square as below-left gives rise to a pullback square as below-right:
\begin{equation}\label{eq:basic-stability}
  \begin{tikzcd}
    \CC \arrow[d, "r"'] \arrow[r, "g"] \arrow[dr, phantom, "\lrcorner" very near start] & \AA \arrow[d, "p"] & \arrow[d, phantom, "\rightsquigarrow"] &     \CC^\II \times \II \arrow[r, "g^\II \times \II"] \arrow[d, "\delta\To r"'] \arrow[dr, phantom, "\lrcorner" very near start] & \AA^\II \times \II \arrow[d, "\delta\To p"]  \\ \ZZ \arrow[r, "f"'] & \XX  & ~ & \CC_\epsilon \arrow[r, "{\langle f^\II \times \II, g\rangle}"'] & \AA_\epsilon
  \end{tikzcd}
\end{equation}
Thus if $p$ is a fibration in the sense of Definition \ref{defn:fibration} then so is $r$. 

In constructive settings, it is natural to ask a bit more. There one frequently encounters structured trivial fibrations and structured fibrations encoded by categories over the arrow category. Two further constructive properties --- the pullback stability of the structured fibrations and the existence of canonical fibration structures on pullbacks of structured fibrations --- are encoded in the following axiomatization, introduced by Shulman under the name \emph{notion of fibered structure} \cite[3.1]{Shulman} and by Swan under the name \emph{fibred notion of structure} \cite[3.2]{Swan}.

 For the remainder of this section we work in a fixed locally cartesian closed category $\cE$, if necessary replacing $\cE$ by an equivalent category so that we may choose pullbacks to make $\cod \colon \cE^\2 \to \cE$ into a cloven Grothendieck fibration.

\begin{defn}[{\cite[3.2]{Swan}}]\label{defn:fibred-notion-of-structure}
A \textbf{fibred notion of structure} is a category over the arrow category $u \colon \cF \to \cE^\2$ so that $\cod\cdot u \colon \cF \to \cE$ is a Grothendieck fibration, $u$ is a cartesian functor, and $u$ ``creates cartesian lifts,'' meaning it restricts to define a discrete fibration $u \colon \cF_\cart \to \cE^\2_\cart$ on the subcategories of cartesian arrows:
\[
\begin{tikzcd} \cF \arrow[rr, "u"] \arrow[dr, "\cod\cdot u"'] & & \cE^\2 \arrow[dl, "\cod"] \\ & \cE
\end{tikzcd}
\]
\end{defn}

Swan observes that under these hypotheses, the Grothendieck fibration $\cod \cdot u\colon \cF \to \cE$ can be split so that $u$ strictly preserves the splitting \cite[5.1]{Swan}.

Our first assumption is:
\begin{enumerate}
\item[(STF0)]\label{item:STF0} The trivial fibrations define a fibered notion of structure $u \colon \TFib \to \cE^\2$.
\end{enumerate}

As our aim is to understand the pullback stability of the pushforward of structured fibrations, we use the following definition, which isolates just one of the axioms in Definition \ref{defn:fibred-notion-of-structure}, since it captures the property that is most directly related to stability under substitution.

\begin{defn}\label{defn:stably-structured-fibrations}
A \textbf{category of stably structured fibrations} over $\cE$ is given by a discrete fibration $u \colon \cF_\cart \to \cE^\2_\cart$.
\end{defn}

A fibred notion of structure restricts to define a category of stably structured fibrations, by restricting to the wide subcategory of cartesian arrows. In particular, by (STF0), the trivial fibrations define a category of stable structured fibrations $u \colon \TFib_\cart \to \cE^\2_\cart$. The intention of Definition \ref{defn:stably-structured-fibrations} is to allow us to introduce a category of structured fibrations over the category of arrows and pullback squares without worrying about what more general morphisms of structured fibrations might be. 

The main idea of Definition \ref{defn:stably-structured-fibrations} is that a fibration structure on a map $p$ induces a universal fibration structure on its pullback $r$. If, as suggested by the definition of fibration given in \eqref{eq:fibration}, a fibration structure on $p$ is defined to be a trivial fibration structure on $\delta\To p$, then we have a category of stably structured fibrations as well defined by pullback.

\begin{defn}\label{defn:fib-cat}
Define the category of stably-structured fibrations by
\begin{equation}\label{eq:fib}
\begin{tikzcd} 
\Fib_\cart \arrow[r, "{\delta\To(-)}"] \arrow[d, "u"'] \arrow[dr, phantom, "\lrcorner" very near start] & \TFib_\cart \arrow[d, "u"] \\ \cE^\2_\cart \arrow[r, "{\delta\To(-)}"'] & \cE^\2_\cart
\end{tikzcd}
\end{equation}
\end{defn}

To define a stable functorial Frobenius operator we require additional assumptions on the class of trivial fibrations, each of which is a natural constructive enhancement of the assumptions (TF1)--(TF3) enumerated in the introduction.

\begin{defn}
The \textbf{free retract} is formed by the following pushout of categories:
\[
\begin{tikzcd} \2 \arrow[d, hook, "d^1"'] \arrow[r, "!"] \arrow[dr, phantom, "\ulcorner" very near end] & \1 \arrow[d, "r"] \\ \3 \arrow[r] & \RR
\end{tikzcd}
\]
The category $\RR$ contains two objects---$r$ the ``retract'' and $c$ the ``center''---and two non-identity morphisms--- $d^2 \colon r \to c$ the section and $d^0 \colon c \to r$ the retraction.
\end{defn}

\begin{lem}
The category of maps with a specified section defines a category of stably-structured fibrations:
\[
\begin{tikzcd} \cE^\RR_\cart \arrow[r, hook] \arrow[d, "d^0"'] \arrow[dr, phantom, "\lrcorner" very near start] & \cE^\RR \arrow[d, "d^0"] \\ \cE^\2_\cart \arrow[r, hook] & \cE^\2
\end{tikzcd}\]
\end{lem}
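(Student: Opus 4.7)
The plan is to verify directly that $u \colon \cE^\RR_\cart \to \cE^\2_\cart$ is a discrete fibration, by unpacking the data of $\cE^\RR$ and then applying the universal property of pullbacks. Since $\cE^\RR_\cart$ is defined as a strict pullback of categories, it is automatically a category with an evident forgetful functor to $\cE^\2_\cart$; the substantive content is the discrete fibration axiom.

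First I would make the data explicit. An object of $\cE^\RR$ amounts to a diagram in $\cE$ consisting of two objects together with a map $p \colon F(c) \to F(r)$ (the image of $d^0 \colon c \to r$) and a section $s \colon F(r) \to F(c)$ (the image of $d^2 \colon r \to c$) satisfying $p \circ s = \id_{F(r)}$. A morphism in $\cE^\RR$ is a natural transformation between such diagrams, given by components at $r$ and $c$ which commute with both $p$ and $s$. The functor $d^0 \colon \cE^\RR \to \cE^\2$ sends $(p,s)$ to the arrow $p$. Thus an object of the pullback $\cE^\RR_\cart$ is a pair $(p,s)$ of a map together with a specified section, and a morphism in $\cE^\RR_\cart$ is a natural transformation of retracts whose underlying square on $p$ is a pullback square in $\cE$.

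Next I would verify the discrete fibration property. Given a morphism in $\cE^\2_\cart$, namely a pullback square
\[
\begin{tikzcd}
\CC \arrow[r, "g"] \arrow[d, "r"'] \arrow[dr, phantom, "\lrcorner" very near start] & \AA \arrow[d, "p"] \\
\ZZ \arrow[r, "f"'] & \XX
\end{tikzcd}
\]
together with an object $(p, s) \in \cE^\RR_\cart$ lifting the codomain $p$, I must produce a unique $(r, t) \to (p, s)$ in $\cE^\RR_\cart$ whose image under $u$ is the given square. The components at $c$ and $r$ of the natural transformation are forced to be $g$ and $f$ respectively; what remains is to exhibit a unique section $t \colon \ZZ \to \CC$ of $r$ satisfying the naturality condition $g \circ t = s \circ f$. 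Since $p \circ (s \circ f) = f = f \circ \id_\ZZ$, the universal property of the pullback square defining $r$ yields a unique $t \colon \ZZ \to \CC$ with $g \circ t = s \circ f$ and $r \circ t = \id_\ZZ$. Uniqueness of $t$ is precisely uniqueness of the lift, and the identity $r \circ t = \id_\ZZ$ says that $t$ is a section, so $(r,t)$ is an object of $\cE^\RR_\cart$ as required.

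I do not anticipate any serious obstacle here: once the objects and morphisms have been correctly unpacked, the discrete fibration axiom reduces to a direct application of the universal property of pullbacks. Conceptually the statement just records the fact that a section over the codomain of a pullback square lifts uniquely to a section over the domain.
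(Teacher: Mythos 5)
Your proof is correct and matches the paper's argument: the paper's entire proof is the one-line observation that ``a section to a map pulls back to define a section to its pullback,'' citing Shulman, and your verification via the universal property of the pullback is exactly the content of that citation, spelled out. No gaps.
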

\begin{proof}
A section to a map pulls back to define a section to its pullback \cite[3.4]{Shulman}.
\end{proof}

In order for our stably structured trivial fibrations to have a stable choice of section we require an assumption, giving a constructive enhancement of (TF1):
\begin{enumerate}
\item[(STF1)]\label{item:STF1} There is a map of discrete fibrations:
\[ \begin{tikzcd} \TFib_\cart \arrow[dr, "u"'] \arrow[rr, dashed, "s"] & & \cE^\RR_\cart \arrow[dl, "d^0"] \\ & \cE^\2_\cart
\end{tikzcd}
\]
\end{enumerate}
In other words, we ask that every trivial fibration has a section and that such sections can be chosen so as to be stable under pullbacks.

Assumption (TF2) asks that trivial fibrations, when considered as arrows in a slice category, are stable under pushforward along arbitrary maps. By the Beck-Chevalley property, pushforward can be regarded as a functor between the categories of composable triples and composable pairs of arrows, respectively, and cartesian natural transformations---natural transformations whose components are pullback squares---between them. 

\[
\begin{tikzcd} \cE^\4_\cart \arrow[rr, "\Pi"] \arrow[dr, "\cod"'] & & \cE^\3_\cart \arrow[dl, "\cod"] \\ & \cE
\end{tikzcd}
\]
On objects this functor acts by sending a composable triple $(r,q,p)$ as below-left, to the composable pair $(p_*r, p_*q)$ below-right, where $p_*q$ denotes the action of the pushforward functor on the object $q$ while $p_*r$ denotes the action on the morphism $r$ in the slice over $\dom p$.
\[
\begin{tikzcd} \CC \arrow[rr, "r"] \arrow[dr, "qr"'] & & \BB \arrow[dl, "q"'] & \arrow[d, phantom, "\mapsto"] & \Pi_{\AA} \CC \arrow[rr, "p_*r"] \arrow[dr, "p_*(qr)"']& & \Pi_{\AA}\BB \arrow[dl, "p_*q"] \\ & \AA \arrow[r, "p"] & \XX & ~ & ~& \XX
\end{tikzcd}
\]
This construction is functorial in pullback squares between the composable triples, but is not functorial on the larger category of natural transformations between composable triples. 

Now use a pullback to form the category whose objects are triples comprised of a structured trivial fibration followed by a composable pair of maps out of its codomain, as below-left:
\[
\begin{tikzcd} \TFib_\cart \times_{\cE} \cE_\cart^\3 \arrow[d, "\ev_{01}"'] \arrow[r, "u \times \id"] \arrow[dr, phantom, "\lrcorner" very near start] & \cE_\cart^\4 \arrow[d, "\ev_{01}"]  & & \arrow[d, "\ev_{01}"']\TFib_\cart\times_\cE \cE_\cart^\2 \arrow[r, "u \times \id"] \arrow[dr, phantom, "\lrcorner" very near start] & \cE_\cart^\3 \arrow[d, "\ev_{01}"] \\ \TFib_\cart \arrow[r, "u"'] & \cE_\cart^\2 & & \TFib_\cart \arrow[r, "u"'] & \cE^\2_\cart
\end{tikzcd}
\]
Another pullback defines a similar category whose objects are pairs comprised of a structured trivial fibration together with a map out of its codomain, as above-right. 
Our next assumption is a constructive enhancement of (TF2):
\begin{enumerate}
\item[(STF2)]\label{item:STF2} There is a lift of the pushforward functor:
\[ \begin{tikzcd} \TFib_\cart\times_{\cE} \cE_\cart^\3  \arrow[d, "u \times \id"'] \arrow[r, dashed] & \TFib_\cart \times_{\cE} \cE_\cart^\2 \arrow[d, "u \times \id"] \\ \cE^\4_\cart \arrow[r, "\Pi"'] & \cE^\3_\cart \end{tikzcd}
\]
\end{enumerate}

\begin{ex} 
\label{ex:GS-fibred-notion-of-structure} The condition (TF2) is closely related to the \emph{generalized functorial Frobenius condition} of \cite[\S 6]{GS}. Gambino and Sattler show that if the category of structured trivial fibrations is defined by a right lifting data, then this condition follows from the \emph{functorial Frobenius condition} which dually considers a lift of the pullback to a functor defined on the category of cospans comprised of a structured trivial fibration and a structured left map. In particular, this condition is satisfied when the left maps are the monomorphisms.
\end{ex}

\begin{ex}
In {\cite[\S 2]{A}}, Awodey considers a related setting of a $\tcof \colon \1 \to \Phi$ and shows that for any object $\XX$ there is an associated polynomial monad
  \[ T_\XX \colon \begin{tikzcd} \slice{\XX} \arrow[r, "t_*"] & \slice{\XX \times \Phi} \arrow[r, "\Phi_!"] & \slice{\XX}\end{tikzcd}\]

Algebras for this monad define a fibred notion of structure for which (TF2) holds. In fact, this is a instance of \ref{ex:GS-fibred-notion-of-structure} where $\cJ$ is the category of pullbacks of the universal cofibration $\tcof$ and pullback squares between them.
\end{ex}

Our final assumption enhances (TF3), the stability of trivial fibrations under retracts in the arrow category. By forming a pullback, we may define a category whose objects are retract diagrams of arrows in which the center arrow is a structured trivial fibration:
\[\begin{tikzcd}  \TFib_\cart \times_{\cE^\2_\cart} \cE^{\2 \times \RR}_\cart \arrow[d, "u \times \id"'] \arrow[r] \arrow[dr, phantom, "\lrcorner" very near start] & \TFib_\cart\arrow[d, "u"] \\ \cE^{\RR\times \2}_\cart\arrow[r, "\ev_c"'] & \cE^\2_\cart
\end{tikzcd}
\]
In this category the morphisms are natural transformations whose components define pullback squares between the center and the retract arrows, with the center morphism a morphism of structured trivial fibrations.

Our final assumption is 
\begin{enumerate}
\item[(STF3)]\label{item:STF3} There is a lift of the evaluation at the retract functor:
\[\begin{tikzcd} \TFib_\cart \times_{\cE^\2_\cart} \cE^{\2 \times \RR}_\cart \arrow[d, "u \times \id"'] \arrow[r, dashed, "\ev_r"] & \TFib_\cart \arrow[d, "u"] \\ \cE^{\RR\times \2}_\cart  \arrow[r, "\ev_r"'] & \cE^\2_\cart
\end{tikzcd}
\]
\end{enumerate}

\begin{ex} When the structured trivial fibrations are defined by right lifting data, retracts of such maps inherit canonically specified right lifting data in such a way that (STF3) holds.
\end{ex}

Now we put these assumptions together. Consider the pushforward functor $\Pi \colon \cE^\3_\cart \to \cE^\2_\cart$ that sends a composable pair $(q,p)$ to $p_*q$, pushing forward the arrow $q$ as an object along $p$.

\begin{prop} The pushforward functor lifts to the stably-structured category $\Fib_\cart$:
\[ \begin{tikzcd}  \Fib_\cart  \times_\cE \Fib_\cart \arrow[d, "u \times u"'] \arrow[r, dashed, "\Pi"] & \Fib_\cart \arrow[d, "u"] \\ \cE_\cart^\3 \arrow[r, "\Pi"] & \cE_\cart^\2 \end{tikzcd}
\]
that defines a cartesian functor.
\end{prop}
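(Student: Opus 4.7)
The plan is to combine the retract diagram \eqref{eq:retract} constructed in the proof of Theorem \ref{thm:coquand} with the constructive enhancements (TF1)--(TF3). Given a pair of structured fibrations $q \colon \BB \to \AA$ and $p \colon \AA \to \XX$---that is, structured trivial fibrations on $\delta \To q$ and $\delta \To p$---the aim is to produce a canonical structured trivial fibration on $\delta \To p_*q$, assigned functorially in pullback squares.

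First I would apply (TF1) to the structured trivial fibration on $\delta \To p$ to obtain a pullback-stable section $s$ of $\delta \To p$. With this section in hand, the mate calculations of Lemmas \ref{lem:section-square} and \ref{lem:section-retraction} assemble into the retract diagram \eqref{eq:retract}, whose middle vertical arrow is $(p^\II \times \II)_*(\delta \To q)$. Second, I would apply (TF2) to the composable triple $(\delta \To q,\; \epsilon^* p,\; p^\II \times \II)$, whose leftmost term is a structured trivial fibration by hypothesis; this produces a structured trivial fibration on $(p^\II \times \II)_*(\delta \To q)$, assigned in a manner stable under pullback squares. Third, I would apply (TF3) to the retract diagram \eqref{eq:retract}, regarding the center vertical as the already-structured $(p^\II \times \II)_*(\delta \To q)$, to transport the structure across to the retract $\delta \To p_*q$. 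By Definition \ref{defn:fib-cat} this lifts $p_*q$ to an object of $\Fib_\cart$.

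For cartesian functoriality, given a cartesian morphism $(q', p') \to (q, p)$ in $\Fib_\cart \times_\cE \Fib_\cart$ covering a pullback rectangle in $\cE$, one verifies that the pullback of the retract diagram for $(q, p)$ coincides with the retract diagram for $(q', p')$. This decomposes into pullback-stability of each ingredient: the section from (TF1); the canonical transformations $\kappa, \kappa'$ obtained via mates of the adjoint triples, which pull back by Lemmas \ref{lem:pseudofunctoriality} and \ref{lem:beck-chevalley-isos} (using that $\delta$, $\epsilon$, $\varpi$ are instances of the natural transformations \eqref{eq:varpi-epsilon}, so they transport along pullback squares); and the section-dependent retractions $\tau, \tau'$. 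Once the underlying pullback compatibilities are established, the cartesianness assertions built into (TF2) and (TF3) upgrade them to compatibilities of the structured trivial fibrations, which is exactly what cartesianness of the lift demands.

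The hard part will be verifying pullback-stability of $\tau$ and $\tau'$, since these are built from the section $s$ and factor the Beck-Chevalley isomorphism associated to the pullback defining $\AA_\epsilon$ through the factorization of the identity on $\AA_\epsilon$ given by \eqref{eq:section}. I expect the argument to proceed by observing that the pullback of the commutative triangle \eqref{eq:section} along a base change is precisely the analogous triangle for the pulled-back section, which is itself the pulled-back of $s$ by (TF1); then Lemma \ref{lem:pseudofunctoriality} ensures the induced factorization of conjugate isomorphisms transports accordingly, and Warning \ref{war:pseudofunctoriality} governs the comparison isomorphisms that must be tracked between composites of pushforwards. After this diagram chase, the remainder of the argument is bookkeeping: (TF2) and (TF3) do the genuinely constructive work, while Theorem \ref{thm:KS} and the Beck-Chevalley machinery of Section \ref{sec:lcc} handle all purely formal manipulations.
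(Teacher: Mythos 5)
Your proposal follows essentially the same route as the paper's proof: construct the retract diagram \eqref{eq:retract} using the pullback-stable section from (TF1), apply (TF2) to structure the center map $(p^\II\times\II)_*(\delta\To q)$, transport the structure to the retract $\delta\To p_*q$ via (TF3), and reduce cartesian functoriality to the pullback-stability of $\kappa$, $\kappa'$, $\tau$, $\tau'$ via the Beck--Chevalley isomorphisms and Lemma \ref{lem:pseudofunctoriality}. The only slip is notational: the composable triple fed to (TF2) should be $(\delta\To q,\ \epsilon^*q,\ p^\II\times\II)$, with middle term the pullback of $q$, not of $p$.
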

\begin{proof}
On account of the pullback \eqref{eq:fib}, it suffices to show that the composite functor has a lift:
\[ 
\begin{tikzcd} \Fib_\cart  \times_\cE \Fib_\cart \arrow[d, "u \times u"'] \arrow[rr, dashed, "\delta\To\Pi"] & & \TFib_\cart \arrow[d, "u"] \\ \cE^\3_\cart \arrow[r, "\Pi"'] & \cE^\2_\cart \arrow[r, "\delta\To(-)"'] & \cE^\2_\cart
\end{tikzcd}
\]
Our proof of Theorem \ref{thm:coquand} demonstrates that the bottom-left composite factors as 
\[
\begin{tikzcd} \Fib_\cart \times_\cE \Fib_\cart \arrow[r, "u \times \id"] & \cE^\2_\cart \times_\cE \Fib_\cart \arrow[r] & \cE^{\2 \times \RR}_\cart \arrow[r, "\ev_r"] & \cE^\2
\end{tikzcd}
\]
through the construction of the retract diagram \eqref{eq:retract} associated to a composable pair of morphisms, the second of which is a fibration.

The retract diagram is constructed from the diagram that sends a composable pair $(q,p) \in \cE^\2_\cart \times_\cE \Fib_\cart$ to the diagram 
\[
\begin{tikzcd} \BB^\II \arrow[d, "q^\II"'] & \BB^\II \times \II \arrow[l, "\varpi"'] \arrow[r, "\epsilon"] \arrow[d, "q^\II \times \II"'] & \BB \arrow[d, "q"] \\ 
  \AA^\II \arrow[d, "p^\II"'] & \AA^\II \times \II \arrow[l, "\varpi"'] \arrow[r, "\epsilon"] \arrow[d, "p^\II \times \II"'] & \AA \arrow[d, "p"] \\ \XX^\II & \XX^\II \times \II \arrow[l, "\varpi"] \arrow[r, "\epsilon"'] & \XX
\end{tikzcd}  
\]
The domain and codomains of $\varpi$ and $\epsilon$ are cartesian functors. It follows that their naturality squares for a pair of composable morphisms pull back along any $f \colon \XX' \to \XX$ to define naturality squares for the pulled back morphisms. To build the retract diagram from this data we first form the pullbacks in the right-hand squares to define the maps $\delta\To q$ and $\delta \To p$. By (STF0) and Definition \ref{defn:fib-cat}, this defines a cartesian functor from $\Fib \times_\cE \Fib$ to the category of such diagrams where the pullback corner maps are structured trivial fibrations. 

Then by (STF1) there is a further cartesian functor that extends this diagram by incorporating the canonical section $s$ to $\delta\To p$. A further cartesian functor then constructs the retract diagram \eqref{eq:retract}. Here the Beck-Chevalley condition tells us that constructions of the vertical morphisms and the canonical transformations $\kappa$ and $\kappa'$ are stable under pullback; the Beck-Chevalley condition and (STF1) provide the pullback stability of the construction of the morphisms $\tau$ and $\tau'$. Finally, (STF2) provides a pullback stable trivial fibration structure on the center morphism. This defines the desired lift:
\[
\begin{tikzcd} \Fib_\cart \times_\cE \Fib_\cart \arrow[d, "\id \times u"'] \arrow[r, dashed] & \TFib_\cart \times_{\cE^\2_\cart} \cE^{\2 \times \RR}_\cart \arrow[d] \arrow[r, "\ev_r"] & \TFib_\cart \arrow[d, "u"] \\ \cE_\cart^\2 \times_\cE \Fib_\cart \arrow[r] & \cE_\cart^{\2 \times \RR} \arrow[r, "\ev_r"] & \cE^\2_\cart
\end{tikzcd}
\]
To conclude, (STF3) defines the lifted functor projecting to the retract. 
\end{proof}


\begin{thebibliography}{ABCFHL}

\bibitem[ABCFHL]{ABCFHL} Carlo Angiuli, Guillaume Brunerie, Thierry Coquand, Robert Harper, Kuen{-}Bang Hou (Favonia), Daniel R. Licata, {Syntax and models of Cartesian cubical type theory}, {\em Math. Struct. Comput. Sci.}, {31(4)}, {424--468}, {2021}.

\bibitem[A]{A} Steve Awodey, Cartesian cubical model categories, 2023, \href{https:/ /arxiv.org/abs/2305.00893}{arXiv:2305.00893}

\bibitem[B]{Barton2024} Reid Barton, A short proof of Frobenius for generic fibrations, 2024, \href{https://arxiv.org/abs/2402.04227}{arXiv:2402.04227}

\bibitem[C]{C} Thierry Coquand, Variation on Cubical sets, \url{https://www.cse.chalmers.se/~coquand/diag.pdf}
 
\bibitem[GS]{GS} Nicola Gambino and Christian Sattler, 
The Frobenius condition, right properness, and uniform fibrations, {\em J.~Pure Appl.~Algebra}, {\bf 221 (12)}, {3027--3068}, 2017.




\bibitem[KS]{kel74} 
  Kelly, G. M. \& Street, Ross. (1974). Review of the elements of {$2$}-categories, {\it Category {S}eminar {P}roc. {S}em., {S}ydney, 1972/1973, Lecture Notes in Math.}, {\bf 420}, 75--103.

\bibitem[N]{Newstead} Clive Newstead, Algebraic models of
dependent type theory, PhD Thesis, Carnegie Mellon University, 2018.

\bibitem[P]{Pavlovic} Du\v{s}ko Pavlovi\'{c}, Categorical interpolation: Descent and the Beck-Chevalley condition without direct images, in: {\em Category Theory:  Proceedings of the International Conference held in Como, Italy, July 22-28, 1990}, Lecture Notes in Mathematics {\bf 1488}, 306--325, 1991.

\bibitem[Sh]{Shulman} Michael Shulman, All $(\infty,1)$-toposes have strict univalent universes, 2019, \href{https://arxiv.org/abs/1904.07004}{arXiv:1904.07004}

\bibitem[Sw]{Swan} Andrew W. Swan, Definable and Non-definable Notions of Structure, 2022, \href{https://arxiv.org/pdf/2206.13643}{arXiv:2206.13643} 

\end{thebibliography}
\end{document}